\numberwithin{equation}{section}
\newtheorem{thm}[equation]{Theorem}
\newtheorem{cor}[equation]{Corollary}
\newtheorem{lem}[equation]{Lemma}
\newtheorem{prop}[equation]{Proposition}
\theoremstyle{definition}\newtheorem{Rem}[equation]{Remark}
\theoremstyle{definition}\newtheorem{Def}[equation]{Definition}
\theoremstyle{definition}
\theoremstyle{definition}
\newcommand{\F}{\mathbb{F}}
\newcommand{\GL}{{\rm GL}}
\renewcommand{\le}{\leqslant}
\renewcommand{\ge}{\geqslant}
\newcommand{\mcB}{\mathcal{B}}
\newcommand{\mcC}{\mathcal{C}}
\newcommand{\mcD}{\mathcal{D}}
\newcommand{\mcM}{\mathcal{M}}
\newcommand{\mcS}{\mathcal{S}}
\newcommand{\msC}{\mathscr{C}}
\newcommand{\msJ}{\mathscr{J}}
\newcommand{\msS}{\mathscr{S}}
\newcommand{\msZ}{\mathscr{Z}}
\newcommand{\lcm}{\text{lcm}}
\newcommand*\xbar[1]{%
   \hbox{%
     \vbox{%
       \hrule height 0.5pt 
       \kern0.5ex
       \hbox{%
         \kern-0.1em
         \ensuremath{#1}%
         \kern-0.1em
       }%
     }%
   }%
}
\title{A new infinite family of $\sigma$-elementary rings}
\date{\today}
\author{Eric Swartz}
\address{Department of Mathematics, William \& Mary, P.O. Box 8795, Williamsburg, VA 23187-8795, USA}
\email{easwartz@wm.edu}
\author{Nicholas J. Werner}
\address{Department of Mathematics, Computer and Information Science, State University of New York College at Old Westbury, Old Westbury, NY 11560, USA}
\email{wernern@oldwestbury.edu}
\begin{document}

\begin{abstract}
A cover of an associative (not necessarily commutative nor unital) ring $R$ is a collection of proper subrings of $R$ whose set-theoretic union equals $R$. If such a cover exists, then the covering number $\sigma(R)$ of $R$ is the cardinality of a minimal cover, and a ring $R$ is called $\sigma$-elementary if $\sigma(R) < \sigma(R/I)$ for every nonzero two-sided ideal $I$ of $R$.  In this paper, we provide the first examples of $\sigma$-elementary rings $R$ that have nontrivial Jacobson radical $J$ with $R/J$ noncommutative, and we determine the covering numbers of these rings. 
\end{abstract}

\maketitle

\section{Introduction}\label{sect:intro}

Throughout this paper, all rings are associative, but do not necessarily contain a multiplicative identity.  A \textit{cover} of a ring $R$ is a collection of proper subrings of $R$ whose set-theoretic union is all of $R$, where $S \subseteq R$ is a subring if $S$ is a group under addition and closed under multiplication; that is, a subring need not contain a multiplicative identity.  A ring need not have a cover---consider, for example, a finite field---but, assuming one exists, the ring is said to be \textit{coverable}, and we define the \textit{covering number} $\sigma(R)$ to be the cardinality of a minimal cover.  If no such cover exists, we say that $\sigma(R) = \infty$.

Analogous problems have been studied extensively for groups (see \cite{Bhargava, Britnell1, Britnell2, BryceFedriSerena, Cohn, DetomiLucchini, Garonzi, GaronziKappeSwartz, Holmes, Maroti, Scorza, Swartz, Tomkinson}). In recent years, covering numbers of rings have begun to receive considerable attention (see \cite{CaiWerner, Cohen, Crestani, LucchiniMaroti, PeruginelliWerner, SwartzWerner, Werner}).  In the case of groups, it is a famous theorem of Tomkinson \cite{Tomkinson} that no group has covering number 7. While many other examples of positive integers that are not the covering number of any group are known (see \cite{GaronziKappeSwartz}), it is an open problem to determine whether or not there are infinitely many positive integers that do not occur as the covering number of a group. To date, no similar theorems have been proved for rings, although it is conjectured that there is no ring with covering number 13. The purpose of the present article is to study a class of finite rings that we believe will help resolve that conjecture, and will be useful in determining covering numbers of other rings.

As noted in \cite[Proof of Proposition 3.1]{SwartzWerner}, results by Neumann \cite[Lemma 4.1, 4.4]{Neumann} and Lewin \cite[Lemma 1]{Lewin} together show that, if a ring $R$ has finite covering number, then there exists a finite homomorphic image of $R$ with the same covering number; in other words, to determine which integers are covering numbers of rings, it suffices to consider finite rings.  Moreover, by \cite[Theorem 3.12]{SwartzWerner}, if $R$ is a unital ring such that its covering number $\sigma(R)$ is finite, then there exists a two-sided ideal $I$ of $R$ such that $R/I$ is finite, $R/I$ has prime characteristic, the Jacobson radical $J$ of $R/I$ satisfies $J^2 = \{0\}$, and $\sigma(R/I) = \sigma(R)$.


The reductions mentioned above hint at a strategy, which was originally employed for the study of covers of groups in \cite{DetomiLucchini}.  It is not difficult to see that, if $I$ is a two-sided ideal of $R$ and $R/I$ has a cover, then the inverse images of the subrings in the cover under the natural epimorphism form a cover for $R$; that is, $\sigma(R) \le \sigma(R/I)$.  It follows that the rings of interest for determining covering numbers are those rings $R$ whose covering number is strictly less than the covering number of all proper quotients.  (For this reason, we use the convention that a ring without a proper cover has infinite covering number.)

\begin{Def}\label{def:sigmaelementary}
A ring $R$ is said to be \textit{$\sigma$-elementary} if $\sigma(R) < \sigma(R/I)$ for every nonzero two-sided ideal $I$ of $R$. Note that a $\sigma$-elementary ring $R$ must be coverable, since $\sigma(R) < \sigma(\{0\}) = \infty$.
\end{Def}

Evidently, determining which integers are covering numbers of rings amounts to classifying all $\sigma$-elementary rings and finding their covering numbers.  In some sense, there are four cases for $\sigma$-elementary rings $R$ with Jacobson radical $J$: (1) $R$ is semisimple (i.e., $J = \{0\}$) and $R/J$ is commutative, (2) $R$ is not semisimple and $R/J$ is commutative, (3) $R$ is semisimple and $R/J$ is not commutative, or (4) $R$ is not semisimple and $R/J$ is not commutative. 
Examples have been found for each of the first three cases (see \cite{Werner, SwartzWerner, LucchiniMaroti}), but, up to this point, it is an open question whether examples exist in the fourth case.

The main result of this paper is the identification of an infinite family of $\sigma$-elementary rings with nontrivial Jacobson radical and noncommutative semisimple quotient.  In order to define these rings, we must first introduce some notation.  We will assume that the reader is familiar with the basic theory of finite rings (e.g., the Artin-Wedderburn Theorem and the structure of finite semisimple rings) and their modules and bimodules. Given a prime power $q$, let $\F_q$ be the finite field with $q$ elements, and let $M_n(q)$ be the ring of $n \times n$ matrices with entries from $\F_q$. For a ring $R$, the Jacobson radical of $R$ is $\msJ(R)$, and if $R$ is unital, then $R^\times$ denotes the unit group of $R$. 

Next, let $q_1$ and $q_2$ be powers of $p$. We define $q_1 \otimes q_2$ to be the order of the field compositum of $\F_{q_1}$ and $\F_{q_2}$, which is $\F_{q_1} \otimes_{\F_p} \F_{q_2}$. Note that if $q_1 = p^{d_1}$ and $q_2 = p^{d_2}$, then $q_1 \otimes q_2 = p^{\lcm(d_1, d_2)}$.  

\begin{Def}\label{def:AGL}
Let $n \ge 1$, let $q_1$ and $q_2$ be powers of the same prime $p$, and let $q = q_1 \otimes q_2$. We define $A(n,q_1,q_2)$ to be the following subring of $M_{n+1}(q)$:
\begin{equation*}
A(n,q_1,q_2) := \left(\text{\begin{tabular}{c|c} $M_n(q_1)$ & $M_{n \times 1}(q)$ \\ \hline $0$ & $\F_{q_2}$ \end{tabular} }\right).
\end{equation*}
\end{Def}

The rings $A(n,q_1,q_2)$ were inspired by representations of the affine general linear group \rm{AGL}$(n,q)$, which is isomorphic to a subgroup of the unit group of $A(n,q,q)$. For this reason, we refer to $A(n,q_1,q_2)$ as a ring of \textit{AGL-type}. Note that by construction, $A(n,q_1,q_2)$ is noncommutative and has a nonzero radical.

Next, let $\omega$ denote the prime omega function, which counts the number of distinct prime divisors of a natural number. Note that $\omega(1) = 0$. Finally, let $\binom{n}{k}_q$ denote the $q$-binomial coefficient, which counts the number of $k$-dimensional subspaces of $\F_q^n$:
\begin{equation*}
\binom{n}{k}_q = \frac{(q^n - 1)(q^{n-1}-1) \cdots (q^{n - (k-1)}-1)}{(q^k -1)(q^{k-1} -1)\cdots (q-1)}.
\end{equation*}

We can now state our main result.

\begin{thm}\label{thm:main}
Let $R \cong A(n,q_1, q_2)$, where $n \ge 1$, and let $q := q_1 \otimes q_2 = q_1^d$. If $n \ge 2$, then let $a$ be the smallest prime divisor of $n$.  Then, $R$ is $\sigma$-elementary if and only if one of the following holds:
\begin{enumerate}
\item $n=1$ and $(q_1, q_2) \ne (2, 2)$ or $(4,4)$. In this case, $\sigma(R) = q+1$.
\item $n \ge 3$, $d < n - (n/a)$, and $(n, q_1) \ne (3, 2)$. In this case,
\begin{equation*}
\sigma(R) = q^n + \binom{n}{d}_{q_1} + \omega(d).
\end{equation*}
\end{enumerate}
\end{thm}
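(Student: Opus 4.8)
The plan is to first pin down the ideal structure of $R = A(n,q_1,q_2)$ and reduce the $\sigma$-elementary question to a single inequality. Writing elements of $R$ in block form $\begin{pmatrix} A & v \\ 0 & s \end{pmatrix}$ with $A \in M_n(q_1)$, $v \in \F_q^n$, and $s \in \F_{q_2}$, I would show that $J := \msJ(R)$ is the strictly upper block $\begin{pmatrix} 0 & \F_q^n \\ 0 & 0 \end{pmatrix}$, that $J^2 = 0$, and that $R/J \cong M_n(q_1) \times \F_{q_2}$. The crucial observation is that $J$, viewed as an $(M_n(q_1), \F_{q_2})$-bimodule, is \emph{simple}: an additive subgroup of $\F_q^n$ stable under left multiplication by $M_n(q_1)$ and right multiplication by $\F_{q_2}$ must be an $\F_q$-subspace, since $\F_q = \F_{q_1}\F_{q_2}$ is the compositum, hence equals $0$ or all of $J$. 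A short absorption computation then shows that every nonzero two-sided ideal of $R$ contains $J$, so $J$ is the \emph{unique} minimal ideal. Because $\sigma(R/I) \ge \sigma(R/J)$ whenever $I \supseteq J$, this reduces the whole problem to proving that $R$ is $\sigma$-elementary if and only if $\sigma(R) < \sigma(R/J)$.

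Next I would compute $\sigma(R/J) = \sigma(M_n(q_1) \times \F_{q_2})$. For $n \ge 2$ the ring $M_n(q_1)$ is noncommutative and has no single generator (the subring generated by one element is commutative), and $M_n(q_1)$ and $\F_{q_2}$ share no common simple quotient except in degenerate cases; hence, away from small exceptions, no ``diagonal'' maximal subring occurs and only subrings of the form (maximal subring of $M_n(q_1)$)$\,\times\, \F_{q_2}$ can contain a pair whose second coordinate generates $\F_{q_2}$. This forces $\sigma(M_n(q_1) \times \F_{q_2}) = \sigma(M_n(q_1))$, whose value I would invoke or establish. The case $n = 1$, where $R/J = \F_{q_1} \times \F_{q_2}$, and the excluded small pairs $(q_1,q_2) \in \{(2,2),(4,4)\}$, where a diagonal maximal subring does appear, I would treat by hand.

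The heart of the argument is the exact value of $\sigma(R)$, obtained by exhibiting an explicit cover matching the three terms and then proving a matching lower bound. For the upper bound I would use three families. First, the $q^n$ conjugate splittings $S_w := u_w S_0 u_w^{-1}$, where $S_0 = \begin{pmatrix} M_n(q_1) & 0 \\ 0 & \F_{q_2} \end{pmatrix}$ is the standard complement to $J$ and $u_w = \begin{pmatrix} I & w \\ 0 & 1 \end{pmatrix}$ for $w \in \F_q^n$; a direct computation shows $\begin{pmatrix} A & v \\ 0 & s \end{pmatrix}$ lies in some $S_w$ precisely when $v \in \operatorname{im}(A - sI)$, so these cover every element for which $s$ is not an eigenvalue of $A$. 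Second, a family of $\binom{n}{d}_{q_1}$ subspace-stabilizer subrings, indexed by the $d$-dimensional $\F_{q_1}$-subspaces of $\F_{q_1}^n$, which handle the residual elements where $s$ is an eigenvalue of $A$; the dimension $d = [\F_q : \F_{q_1}]$ enters because the $\F_{q_1}$-span of an $\F_q$-eigenline of an eigenvalue $s \in \F_{q_2}$ generating $\F_q$ over $\F_{q_1}$ is $d$-dimensional. Third, $\omega(d)$ subfield-reduction subrings, one for each maximal intermediate field $\F_{q_1} \subsetneq F \subsetneq \F_q$, which are indexed by the primes dividing $d$, to finish off the remaining elements. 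Verifying that each of these is a proper subring and that their union is all of $R$ is a direct, if careful, check.

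The main obstacle is the lower bound: showing that no cover of $R$ uses fewer than $q^n + \binom{n}{d}_{q_1} + \omega(d)$ subrings, and extracting the precise inequality $d < n - (n/a)$ together with the exceptions. Here I would argue that the generic elements (those with $A - sI$ invertible and $s$ generating $\F_{q_2}$) can only be covered by complement-type subrings, of which at least $q^n$ are needed; that covering the eigenvalue locus forces at least $\binom{n}{d}_{q_1}$ subspace-type subrings; and that the field-reduction locus forces $\omega(d)$ more, the three demands being essentially independent. Comparing the resulting formula for $\sigma(R)$ with the value $\sigma(M_n(q_1))$ from the second step then converts the criterion $\sigma(R) < \sigma(R/J)$ into the stated inequality $d < n - (n/a)$, while a finite check disposes of the boundary case $(n,q_1) = (3,2)$ and shows that $n = 2$ never yields a $\sigma$-elementary ring. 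I expect the independence and optimality of the three families---ruling out a cleverer cover that trades one type of subring for another---to be the most delicate part of the proof.
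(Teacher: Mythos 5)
Your outline reproduces the paper's architecture in its broad strokes: the reduction to the single inequality $\sigma(R) < \sigma(R/J)$ via the fact that every nonzero two-sided ideal contains $J$, the same three families of subrings (the $q^n$ conjugate complements $\msS(R)$, the $\binom{n}{d}_{q_1}$ stabilizers $T_U \oplus \F_{q_2}$, and the $\omega(d)$ subrings $M_n(q_1)\oplus\F_r$), the eigenvalue computation identifying which elements the complements miss, and the final comparison with $\sigma(M_n(q_1))$ to extract the condition $d < n-(n/a)$. However, there is a genuine gap exactly where you flag ``the most delicate part'': no mechanism is supplied for the lower bound, and that is where essentially all of the paper's work lies. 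One specific error: your claim that generic elements ``can only be covered by complement-type subrings'' is false. An element $\left(\begin{smallmatrix} A & v\\ 0 & s\end{smallmatrix}\right)$ with $A-sI$ invertible can lie in a maximal subring of the form $T\oplus J$ with $T$ maximal in $S$ (for instance when $A$ has entries in a proper subfield of $\F_{q_1}$), so the count of $q^n$ is not forced by generic elements alone. The paper instead forces every member of $\msS(R)$ into every minimal cover via a direct-summand lemma (Lemma \ref{lem:SigmaElementary}(2)), which crucially uses the hypothesis $\sigma(R)<\sigma(R/J)$, i.e.\ the very $\sigma$-elementarity being characterized.

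The larger missing idea is the proof that $\binom{n}{d}_{q_1}+\omega(d)$ further subrings are necessary and that no cover can trade one type for another. The paper does this by constructing an explicit witness set $\Pi$ of elements $t_U+\alpha$ built from pairs of Singer cycles (type $T_d$ elements), proving that the only maximal subrings of $M_n(q_1)$ containing such an element are the two Type I stabilizers of $U$ and $\phi_d(U)$ and certain Type II subrings (Lemma \ref{lem:Tk}), and then running a fractional-counting argument (Lemma \ref{lem:mincovercriterion}, the ring analogue of the Detomi--Lucchini and Garonzi--Kappe--Swartz method) supported by nontrivial estimates on $q$-binomial coefficients and ratios of $|\GL|$ orders to verify $c(M)\le 1$ for every maximal subring outside the proposed cover. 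Without some version of this, the formula for $\sigma(R)$, and hence the threshold $d<n-(n/a)$, is unproven. A smaller omission: excluding $\sigma$-elementarity for all $d\ge n-(n/a)$ is an infinite family of cases, not a finite check; it requires the general estimate $\sigma(M_n(q_1))\le q_1^{nd}$ (the paper's Lemma \ref{lem:bounds}) to contradict the lower bound $\sigma(R)\ge q^n+1$.
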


This paper is organized as follows.  In Section \ref{sect:prelim}, we collect a number of background results that will be used in the rest of the article.  Section \ref{sect:small} is devoted to an analysis of the rings $A(n, q_1, q_2)$ when $n = 1,2$, Section \ref{sect:setup} sets up the machinery for case when $n \ge 3$, and Section \ref{sect:AGL} is dedicated to the proof of Theorem \ref{thm:main}.


\section{Frequently used results}\label{sect:prelim}
Here, we collect some previously established results that will be used often in later sections. Each part of our first lemma is straightforward to prove, and we will apply these properties without attribution going forward.

\begin{lem}\label{lem:basics}
Let $R$ be a ring with unity.
\begin{enumerate}[(1)]
\item $R$ is coverable if and only if $R$ cannot be generated (as a ring) by a single element.
\item If $R$ is noncommutative, then $R$ is coverable.
\item For any two-sided ideal $I$ of $R$, a cover of $R/I$ can be lifted to a cover of $R$. Hence, $\sigma(R) \le \sigma(R/I)$.
\item If each proper subring of $R$ is contained in a maximal subring, then we may assume that any minimal cover of $R$ consists of maximal subrings.
\end{enumerate}
\end{lem}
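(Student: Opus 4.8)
The common engine behind parts (1) and (2) is the fact that the subring $\langle x\rangle$ generated by a single element $x \in R$ is always commutative: its elements are $\Z$-linear combinations of the powers $x, x^2, x^3, \dots$ (recall that subrings here need not contain $1$), and these all commute. For part (1) the plan is to prove both implications through this subring. If $R = \langle x\rangle$ for some $x$, then any proper subring $S$ must omit $x$ --- otherwise $\langle x\rangle \subseteq S \subsetneq R$ --- so no family of proper subrings can cover the element $x$, and $R$ is not coverable. Conversely, if no single element generates $R$, then every $\langle x\rangle$ is a proper subring, and the family $\{\langle x\rangle : x \in R\}$ covers $R$ since $x \in \langle x\rangle$; hence $R$ is coverable. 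Part (2) then follows at once: a ring generated by one element is commutative, so a noncommutative $R$ is generated by no single element and is coverable by part (1).

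For part (3), the plan is to lift along the canonical surjection $\pi : R \to R/I$. Given a cover $\{\overline{S}_i\}$ of $R/I$, I would set $S_i := \pi^{-1}(\overline{S}_i)$ and check the three defining features of a cover: each $S_i$ is a subring because preimages of subrings under ring homomorphisms are subrings; each $S_i$ is proper because lifting any element of $(R/I)\setminus\overline{S}_i$ yields an element of $R \setminus S_i$; and the $S_i$ cover $R$ because each $r \in R$ has $\pi(r) \in \overline{S}_j$ for some $j$, whence $r \in S_j$. Surjectivity of $\pi$ makes the assignment $\overline{S}_i \mapsto S_i$ injective, so the lifted cover has the same cardinality as the original; applying this to a minimal cover of $R/I$ gives $\sigma(R) \le \sigma(R/I)$.

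For part (4), I would start from an arbitrary minimal cover $\{S_1,\dots,S_m\}$, so $m = \sigma(R)$, and invoke the hypothesis to choose for each $i$ a maximal subring $M_i \supseteq S_i$. The family $\{M_i\}$ still covers $R$ (as $S_i \subseteq M_i$) and consists of proper subrings, so it is a cover of size at most $m$; by minimality of $m$ the $M_i$ must be pairwise distinct, so $\{M_i\}$ is itself a minimal cover, now built from maximal subrings. This justifies restricting attention to covers by maximal subrings whenever the hypothesis holds.

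None of these steps presents a genuine obstacle --- the paper rightly calls them straightforward --- but the two points deserving care are the precise description of the (non-unital) generated subring $\langle x\rangle$ in parts (1) and (2), and the remark in part (4) that enlarging cover members can only merge them, never split them, so that a minimal cover cannot collapse in cardinality under the passage to maximal subrings.
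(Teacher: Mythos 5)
Your proofs are correct and are precisely the standard arguments the paper has in mind: it states this lemma without proof, remarking only that each part is straightforward. The two points you flag as needing care (the non-unital subring $\langle x\rangle$ being commutative, and the cardinality not increasing when passing to maximal subrings) are exactly the right ones, and your handling of both is sound.
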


Slightly more substantial is the next lemma, which can ensure that a maximal subring of $R$ is included in every cover of $R$.

\begin{lem}\label{lem:SigmaElementary}
Let $R$ be a coverable ring, and let $\mcC$ be a minimal cover of $R$. 
\begin{enumerate}[(1)]
\item \cite[Lemma 2.1]{Werner} If $M$ is a maximal subring of $R$ and $M \notin \mcC$, then $\sigma(M) \leq \sigma(R)$.

\item \cite[Lemma 2.2]{SwartzWerner} Let $S$ be a proper subring of $R$ such that $R=S \oplus I$ for some two-sided ideal $I$ of $R$. If $\sigma(R) < \sigma(R/I)$, then $S \subseteq T$ for some $T \in \mcC$. If, in addition, $S$ is a maximal subring of $R$, then $S \in \mcC$.

\end{enumerate}
\end{lem}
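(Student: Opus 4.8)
The plan is to treat both parts through a single elementary device: if $U$ is a subring of $R$ that is not contained in any member of the cover $\mcC$, then intersecting $\mcC$ with $U$ produces a cover of $U$ by proper subrings, so that $\sigma(U) \le |\mcC|$. I would isolate this observation first. Writing $\mcC = \{T_1, \dots, T_k\}$ with $k = |\mcC| = \sigma(R)$ (a minimal cover), the sets $U \cap T_i$ are subrings of $U$ whose union is $U$, because $\mcC$ covers $R \supseteq U$; and each $U \cap T_i$ is a proper subring of $U$ precisely when $U \not\subseteq T_i$. Hence, if $U \not\subseteq T_i$ for every $i$, these subrings cover $U$ and $\sigma(U) \le k = \sigma(R)$ (duplicates among the $U \cap T_i$ only lower the count).

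For part (1), I would apply this with $U = M$. The only thing to check is that no $M \cap T_i$ equals $M$. If $M \subseteq T_i$ for some $i$, then since $T_i$ is a proper subring of $R$ and $M$ is maximal, the chain $M \subseteq T_i \subsetneq R$ forces $T_i = M$, so $M = T_i \in \mcC$, contradicting the hypothesis $M \notin \mcC$. Thus every $M \cap T_i$ is proper in $M$, and the observation gives $\sigma(M) \le \sigma(R)$.

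For part (2), I would first record that the hypothesis $R = S \oplus I$ (as additive groups, with $I$ a two-sided ideal) makes the restriction to $S$ of the quotient map $R \to R/I$ a ring isomorphism $S \cong R/I$: it is surjective because $R = S + I$ and injective because $S \cap I = \{0\}$. Then I argue by contraposition. If $S$ were contained in no member of $\mcC$, the observation (with $U = S$) would yield $\sigma(R/I) = \sigma(S) \le \sigma(R)$, contradicting the assumption $\sigma(R) < \sigma(R/I)$. Therefore $S \subseteq T$ for some $T \in \mcC$. If in addition $S$ is maximal, then as in part (1) the chain $S \subseteq T \subsetneq R$ forces $T = S$, so $S = T \in \mcC$.

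The arguments are short, and the only genuinely delicate points are the two places where maximality is invoked to collapse a containment into an equality, and the verification that the direct-sum hypothesis really yields $S \cong R/I$ rather than a mere additive splitting. I expect the main obstacle to be bookkeeping rather than conceptual: confirming that $S \cap I = \{0\}$ together with $I$ being a two-sided ideal is exactly what is needed for the projection to respect multiplication on $S$, and that the intersected family is counted correctly so that the bound $\sigma(U) \le k$ survives possible coincidences among the $U \cap T_i$.
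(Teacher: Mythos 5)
Your proof is correct and follows essentially the same route as the sources the paper cites for this lemma (the paper itself gives no proof, only references): the device of intersecting a minimal cover with a subring not contained in any of its members, plus the observation that maximality collapses $M \subseteq T_i \subsetneq R$ to $M = T_i$, is exactly the standard argument, and your reduction of part (2) to part (1)'s mechanism via the ring isomorphism $S \cong R/I$ induced by the splitting $R = S \oplus I$ is the intended one.
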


Any ring of AGL-type is a finite ring of characteristic $p$. The Wedderburn-Malcev Theorem \cite[Sec.\ 11.6, Cor.\ p.\ 211]{Pierce}, \cite[Thm.\ VIII.28]{McDonald} (also known as the Wedderburn Principal Theorem) describes the basic structure of such rings.

\begin{thm}\label{thm:Wedderburn} (Wedderburn-Malcev Theorem)
Let $R$ be a finite ring with unity of characteristic $p$. Then, there exists an $\F_p$-subalgebra $S$ of $R$ such that $R = S \oplus \msJ(R)$, and $S \cong R/\msJ(R)$ as $\F_p$-algebras. Moreover, $S$ is unique up to conjugation by elements of $1+\msJ(R)$.
\end{thm}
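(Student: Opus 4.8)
The plan is to prove this classical result (the Wedderburn principal theorem together with the Malcev conjugacy refinement) by induction on the nilpotency index of the radical, reducing the whole problem to the square-zero case and there exploiting the separability of $R/\msJ(R)$ over the perfect field $\F_p$.

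First I would record that, because $R$ is finite, its radical $J := \msJ(R)$ is nilpotent, say $J^m = \{0\}$, and proceed by induction on $m$. When $m \le 1$ we have $J = \{0\}$ and may take $S = R$, so assume $m \ge 2$ and set $N := J^{m-1}$, an ideal satisfying $N^2 \subseteq J^{2m-2} = \{0\}$. Passing to $\overline{R} = R/N$, whose radical $J/N$ has nilpotency index less than $m$, the inductive hypothesis furnishes a subalgebra $\overline{S} \subseteq \overline{R}$ complementing $J/N$; its preimage $S' \supseteq N$ in $R$ is then a unital subalgebra with $R = S' + J$, $S' \cap J = N$, $\msJ(S') = N$, and $N^2 = \{0\}$. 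This reduces everything to the following square-zero statement: a finite unital $\F_p$-algebra $A$ whose radical $N$ satisfies $N^2 = \{0\}$ admits a unital subalgebra $S$ with $A = S \oplus N$ and $S \cong A/N$, unique up to conjugation by $1 + N$.

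For the square-zero case I would argue cohomologically. Since $N^2 = \{0\}$, the $A$-bimodule structure on $N$ descends to an $\overline{A}$-bimodule structure, where $\overline{A} := A/N$. Choosing any unital $\F_p$-linear section $\phi \colon \overline{A} \to A$ of the projection, the function $f(x,y) := \phi(x)\phi(y) - \phi(xy)$ takes values in $N$ and, by associativity in $A$, is a $2$-cocycle in the Hochschild complex of $\overline{A}$ with coefficients in $N$; the section is multiplicative precisely when $f = 0$, and replacing $\phi$ by $\phi + g$ for an $\F_p$-linear $g \colon \overline{A} \to N$ alters $f$ by the coboundary of $g$. The decisive input is that $\overline{A}$ is semisimple over the \emph{perfect} field $\F_p$, hence separable, so $H^2(\overline{A}, N) = 0$ and the class of $f$ vanishes; thus $\phi$ can be adjusted to an algebra homomorphism and $S := \phi(\overline{A})$ is the desired complement. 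For uniqueness, two such complements give sections $\phi_1, \phi_2$ whose difference $g := \phi_1 - \phi_2$ is (again using $N^2 = \{0\}$) a derivation $\overline{A} \to N$; separability also forces $H^1(\overline{A}, N) = 0$, so $g$ is inner, $g(x) = n\phi_2(x) - \phi_2(x)n$ for some $n \in N$, and a direct check (using $(1+n)^{-1} = 1 - n$) gives $\phi_1 = (1+n)\phi_2(1+n)^{-1}$, i.e.\ $S_1 = (1+n) S_2 (1+n)^{-1}$ with $1 + n \in 1 + N \subseteq 1 + J$.

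Finally I would climb back up the induction: the square-zero result splits $S'$ as $S \oplus N$ with $S \cong S'/N \cong R/J$, and since $N \subseteq J$ one checks $R = S + J$ and $S \cap J = S \cap N = \{0\}$, giving $R = S \oplus J$. For uniqueness one composes the conjugacy from the inductive hypothesis applied to $R/N$ (an element of $1 + J$ matching two given complements modulo $N$) with the square-zero conjugacy carried out inside the common subalgebra $S_2 + N$ (an element of $1 + N$); as $1 + J$ is closed under multiplication and inversion, the composite is again a conjugation by an element of $1 + J$. The main obstacle --- and the only place the hypotheses are truly used --- is the vanishing of $H^2$ and $H^1$ in the square-zero case, which rests entirely on the separability of $R/\msJ(R)$; this is exactly why the conclusion requires working in characteristic $p$ over the perfect prime field $\F_p$, and the statement genuinely fails over imperfect ground fields. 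If one prefers to avoid cohomology altogether, the same existence argument can be packaged via a separability idempotent $\sum_i u_i \otimes v_i$ for $\overline{A}$, using its lifts to manufacture the multiplicative section directly.
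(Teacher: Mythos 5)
The paper does not prove this statement at all: it is quoted as a known classical result, with citations to Pierce (Sec.~11.6) and McDonald (Thm.~VIII.28), so there is no in-paper argument to compare against. Your proposal is a correct, self-contained proof along the standard textbook lines (essentially the route taken in Pierce): reduce by induction on the nilpotency index of $\msJ(R)$ to the case $\msJ(R)^2=\{0\}$, observe that the radical then becomes a bimodule over the semisimple quotient, and invoke separability of a finite semisimple $\F_p$-algebra (a product of matrix rings over finite fields, which are separable since $\F_p$ is perfect) to kill $H^2$ for existence of a multiplicative section and $H^1$ for conjugacy of two such sections. The details check out: $N=J^{m-1}$ does satisfy $N^2=\{0\}$ when $m\ge 2$; the preimage $S'$ has $\msJ(S')=N$ because $S'/N\cong R/J$ is semisimple and $N$ is a nilpotent ideal of $S'$; the difference of two multiplicative sections is a derivation precisely because $N^2=\{0\}$ lets you replace $\phi_i(x)$ by the induced $\overline{A}$-action on $N$; and the two conjugating elements compose inside the group $1+J$. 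One point you pass over quickly but which is fine: the inductive complement $\overline{S}$ automatically contains $\bar 1$ (its identity $e$ satisfies $\bar 1-e\in\msJ(\overline R)$ and is idempotent, hence zero), so $S'$ really is unital. In short, the proof is correct; it simply supplies the standard argument that the authors chose to outsource to the literature.
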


By Theorem \ref{thm:Wedderburn}, all semisimple complements to $\msJ(R)$ in $R$ are conjugate. We let $\msS(R)$ be the set of all such complements. Thus, for any $S \in \msS(R)$, we have $R = S \oplus \msJ(R)$, and $S \cong R/\msJ(R)$.

Aside from providing a valuable decomposition for rings of characteristic $p$, the Wedderburn-Malcev Theorem affords a way to classify maximal subrings of $R$.

\begin{lem}\label{lem:MaxSubringClassification}\cite[Theorem 3.10]{SwartzWerner}
Let $R$ be a finite ring with unity of characteristic $p$, let $M$ be a maximal subring of $R$, and let $J = \msJ(R)$.
\begin{enumerate}[(1)]
\item $J \subseteq M$ if and only if $M$ is the inverse image of a maximal subring of $R/J$.
\item $J \not\subseteq M$ if and only if $M = S \oplus \msJ(M)$, where $S \in \msS(R)$ and $\msJ(M) = M \cap J$ is an ideal of $R$ that is maximal among the subideals of $R$ contained in $J$.
\end{enumerate}
\end{lem}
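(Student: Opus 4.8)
The plan is to prove the two parts by different means: part (1) is essentially the lattice-isomorphism (correspondence) theorem for subrings containing an ideal, whereas part (2) needs the structural decomposition supplied by the Wedderburn--Malcev Theorem (Theorem~\ref{thm:Wedderburn}). The main obstacle I anticipate lies in part (2): upgrading $M \cap J$ from an ideal of $M$ to a two-sided ideal of all of $R$. Everything else should reduce to routine bookkeeping with the decomposition $R = S \oplus J$.

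For part (1), let $\pi \colon R \to R/J$ denote the canonical projection. If $M = \pi^{-1}(\olM)$ for some maximal subring $\olM$ of $R/J$, then $J = \ker\pi \subseteq M$; moreover any subring $T$ with $M \subseteq T \subseteq R$ contains $J$, so it corresponds under $\pi$ to a subring lying between $\olM$ and $R/J$, and maximality of $\olM$ forces $T \in \{M, R\}$, so $M$ is maximal. Conversely, if $M$ is maximal and $J \subseteq M$, then $\pi(M)$ is a proper subring of $R/J$, it is maximal because any intermediate subring lifts through $\pi$ to an intermediate subring of $R$, and $M = \pi^{-1}(\pi(M))$ because $M \supseteq J$. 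This direction is entirely routine.

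For part (2), suppose $J \not\subseteq M$. First I would observe that $M + J$ is a subring strictly containing $M$, so maximality gives $M + J = R$; the second isomorphism theorem then yields $M/(M \cap J) \cong R/J$, which is semisimple, while $M \cap J \subseteq J$ is nilpotent, forcing $M \cap J = \msJ(M)$. Applying Theorem~\ref{thm:Wedderburn} to $M$ produces an $\F_p$-subalgebra $S \subseteq M$ with $M = S \oplus \msJ(M)$ and $S \cong R/J$. A dimension count shows $S \in \msS(R)$: one has $S \cap J = S \cap \msJ(M) = \{0\}$ and $\dim_{\F_p} S = \dim_{\F_p}(R/J)$, so $S \oplus J = R$.

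The crux is showing that $N \colonequals M \cap J$ is a two-sided ideal of all of $R$. I would apply maximality to the subring $M + J^2$, which must equal $M$ or $R$. If $M + J^2 = R$, then $N + J^2 = J$, and a Nakayama-type argument---expanding products of elements of $J = N + J^2$ to obtain $J^k \subseteq N + J^{k+1}$ for every $k$, hence $J = N + J^k$ for every $k$, and then invoking nilpotence of $J$---forces $N = J$, contradicting $J \not\subseteq M$. Hence $M + J^2 = M$, i.e.\ $J^2 \subseteq N$; then $RN = SN + JN \subseteq N + J^2 \subseteq N$ and symmetrically $NR \subseteq N$, so $N$ is a two-sided ideal. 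Maximality of $N$ among the proper subideals of $J$ is then immediate: if $N \subsetneq I \subseteq J$ with $I$ an ideal of $R$, then $S \oplus I$ is a subring with $M \subsetneq S \oplus I \subseteq R$, so maximality gives $S \oplus I = R$ and thus $I = J$, showing that no ideal lies strictly between $N$ and $J$. Finally, the reverse implication in part (2) is easy, since $M = S \oplus (M \cap J)$ with $M \cap J \subsetneq J$ forces $J \not\subseteq M$.
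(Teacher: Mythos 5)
The paper offers no proof of this lemma---it is imported verbatim from \cite[Theorem 3.10]{SwartzWerner}---so there is no internal argument to compare against. Judged on its own, your proof has the right architecture: part (1) is indeed just the correspondence theorem, and in part (2) the two substantive steps---forcing $M+J=R$ to identify $\msJ(M)=M\cap J$, and then using the dichotomy $M+J^2\in\{M,R\}$ together with the Nakayama-style iteration $J\subseteq N+J^k$ to conclude $J^2\subseteq N$ and hence that $N$ is an ideal of $R$---are carried out correctly, as is the maximality of $N$ via the subring $S\oplus I$.

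There is one unaddressed point that genuinely matters under this paper's conventions: you apply Theorem \ref{thm:Wedderburn} to $M$, but that theorem is stated only for finite rings \emph{with unity}, and a maximal subring of $R$ is not assumed to contain $1_R$ (the paper stresses in Remarks \ref{rem:unital1} and \ref{rem:unital} that subrings need not be unital). At the moment you invoke the theorem you know only that $M/(M\cap J)\cong R/J$, which does not by itself make $M$ unital. The gap is short to fill: since $\msJ(M)=M\cap J$ is nil, lift the identity of $M/\msJ(M)\cong R/J$ to an idempotent $e\in M$; then $1_R-e\in J$ and $(1_R-e)^2=1_R-e$, so $1_R-e$ is a nilpotent idempotent, whence $e=1_R\in M$ and Theorem \ref{thm:Wedderburn} applies to $M$ as stated. (Alternatively, prove a non-unital Wedderburn--Malcev by adjoining a unit.) A smaller omission: the inclusion $RN=SN+JN\subseteq N+J^2$ tacitly uses $SN\subseteq N$, which deserves the one-line justification $SN\subseteq M$ (both factors lie in $M$) and $SN\subseteq J$ ($J$ is an ideal), so $SN\subseteq M\cap J=N$. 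With these two points supplied, the proof is complete.
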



\section{Basic properties and small cases}\label{sect:small}
Recall from the introduction that when $q_1$ and $q_2$ are powers of the same prime $p$, we define $q_1 \otimes q_2$ to be the order of $\F_{q_1} \otimes_{\F_p} \F_{q_2}$. For $n \ge 1$, we let $A(n,q_1,q_2)$ be the following subring of $M_{n+1}(q)$:
\begin{equation*}
A(n,q_1,q_2) := \left(\text{\begin{tabular}{c|c} $M_n(q_1)$ & $M_{n \times 1}(q)$ \\ \hline $0$ & $\F_{q_2}$ \end{tabular} }\right),
\end{equation*}
where $q = q_1 \otimes q_2$. The definition of these rings is motivated by the matrix representation of the affine general linear group $\textnormal{AGL}(n,q)$, which is isomorphic to the subgroup
\begin{equation*}
\left(\text{\begin{tabular}{c|c} $\textnormal{GL}(n,q)$ & $M_{n \times 1}(q)$ \\ \hline $0$ & $1$ \end{tabular} }\right)
\end{equation*}
of the unit group of $A(n,q,q)$.

The goal of this section is describe basic properties of $A(n,q_1,q_2)$ and, for $n \le 2$, to calculate its covering number and determine when it is $\sigma$-elementary (Theorem \ref{thm:smalln}). The corresponding results for $n \ge 3$ are given later in Corollary \ref{cor:AGLn>1} and Theorem \ref{thm:AGLupperbound}. Our strategy is based on the approach used to prove \cite[Theorem 1.7]{GaronziKappeSwartz}, where it is shown that for $n \ne 2$, the group $\textnormal{AGL}(n,q)$ has covering number $(q^{n+1}-1)/(q-1)$. However, our formulas for $\sigma(A(n,q_1,q_2))$---and the methods used to establish them---are more complicated than in the group case.

Throughout this section, $R=A(n,q_1,q_2)$, and $S_1$ and $S_2$ are the following subrings of $R$:
\begin{equation*}
S_1 := \left(\text{\begin{tabular}{c|c} $M_n(q_1)$ & $0$ \\ \hline $0$ & $0$ \end{tabular} }\right) \cong M_n(q_1), \quad \text{ and } \quad S_2 := \left(\text{\begin{tabular}{c|c} $0$ & $0$ \\ \hline $0$ & $\F_{q_2}$ \end{tabular} }\right) \cong \F_{q_2}.
\end{equation*}
The Jacobson radical of $R$ is
\begin{equation*}
J:= \left(\text{\begin{tabular}{c|c} $0$ & $M_{n \times 1}(q)$ \\ \hline $0$ & $0$ \end{tabular} }\right),
\end{equation*}
which is a simple $(S_1, S_2)$-bimodule of order $q^n$. We take $S:=S_1 \oplus S_2$ to be a fixed semisimple complement to $J$ in $R$. Note that $S \cong R/J$. 

The set of all semisimple complements to $J$ in $R$ is denoted by $\msS(R)$. By Theorem \ref{thm:Wedderburn}, $S$ is unique up to conjugation by elements of $1+J$, so $\msS(R) = \{S^{1+x} : x \in J \}$. It is clear that $J^2=\{0\}$, so $(1+x)^{-1} = 1-x$ for all $x \in J$. Hence, $S^{1+x} = \{ s + sx-xs : x \in J\}$ for each $x \in J$.

\begin{lem}\label{lem:Jcenter}
$J \cap Z(R) = \{0\}$, and consequently $|\msS(R)| = |J|$.
\end{lem}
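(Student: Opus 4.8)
The statement has two parts. First, I want to show $J \cap Z(R) = \{0\}$, where $Z(R)$ is the center of $R$. The second claim, $|\msS(R)| = |J|$, should then follow from the explicit parametrization $\msS(R) = \{S^{1+x} : x \in J\}$ given just before the lemma, by showing this parametrization is injective.

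**Proving $J \cap Z(R) = \{0\}$.**
The plan is to take an arbitrary element $z \in J$ and determine when it commutes with everything in $R$. Writing
\begin{equation*}
z = \left(\text{\begin{tabular}{c|c} $0$ & $v$ \\ \hline $0$ & $0$ \end{tabular} }\right)
\end{equation*}
for some column vector $v \in M_{n \times 1}(q)$, I would test commutativity against the two ``obvious'' families of elements: the matrix units from $S_1 \cong M_n(q_1)$ (equivalently, an element $\alpha \in M_n(q_1)$ sitting in the top-left block) and the scalar $\lambda \in \F_{q_2}$ sitting in the bottom-right block. Computing the commutator with $\alpha$ forces $\alpha v = 0$ for all $\alpha \in M_n(q_1)$, which already forces $v = 0$ (taking $\alpha = I_n$). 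So no nonzero element of $J$ is central, giving $J \cap Z(R) = \{0\}$. This direction is a short matrix calculation; the block structure makes the obstruction to centrality immediate.

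**Deducing $|\msS(R)| = |J|$.**
Since $\msS(R) = \{S^{1+x} : x \in J\}$, it suffices to show the map $x \mapsto S^{1+x}$ is injective, i.e. that $S^{1+x} = S^{1+y}$ implies $x = y$. Because $J^2 = \{0\}$, conjugation simplifies to $S^{1+x} = \{s + sx - xs : s \in S\}$, so $S^{1+x} = S^{1+y}$ exactly when $sx - xs = sy - ys$ for the matching $s$, i.e. when $s(x-y) = (x-y)s$ for all $s \in S$; equivalently $x - y \in J$ commutes with every element of $S$. Writing $w = x - y \in J$ and noting that $S = S_1 \oplus S_2$, the condition says $w$ commutes with all of $S_1 \cong M_n(q_1)$ and all of $S_2 \cong \F_{q_2}$. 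By the same matrix computation as above (testing against $\alpha = I_n \in M_n(q_1)$), this forces $w = 0$, hence $x = y$. Therefore the parametrization is a bijection from $J$ onto $\msS(R)$, giving $|\msS(R)| = |J|$.

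**Main obstacle.**
I expect no serious obstacle here; both parts reduce to the same elementary observation that a vector killed by all of $M_n(q_1)$ (in particular by the identity) must be zero. The only point requiring mild care is keeping track of the bimodule action when $q_1 \ne q_2$: the column entries lie in the larger field $\F_q$ while the top-left block has entries in $\F_{q_1}$, so I must confirm that multiplication by $I_n \in M_n(q_1)$ acts as the identity on $M_{n \times 1}(q)$, which it does since $\F_{q_1} \subseteq \F_q$. With that checked, both claims follow from the single commutator computation.
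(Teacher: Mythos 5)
Your proof is correct and follows essentially the same route as the paper: both parts come down to the observation that the identity $e_1 = I_n$ of $S_1$ acts as the identity on one side of $J$ and annihilates it on the other, so no nonzero element of $J$ can centralize $S$; the paper phrases the counting step as $|\msS(R)| = |J : (J\cap Z(R))|$ via the stabilizer of $S$ under conjugation by $1+J$, while you show injectivity of $x\mapsto S^{1+x}$ directly, but the underlying computation is identical. No gaps.
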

\begin{proof}
Let $e_1$ and $e_2$ be the unit elements of $S_1$ and $S_2$, respectively. Then, $e_1 x = x = x e_2$ for any $x \in J$. However, $e_1$ and $e_2$ annihilate one another, so if $x \in J \cap Z(R)$, then 
\begin{equation*}
x = e_1 x = x e_1 = (x e_2) e_1 = 0.
\end{equation*}
Now, for all $s \in S$ and $x \in J$, we have $s^{1+x} = s + sx-xs$. So, having $S^{1+x} = S$ is equivalent to having $sx-xs \in S$. But, $sx - xs \in J$, and $S \cap J = \{0\}$, so $S^{1+x}=S$ if and only if $x \in J \cap Z(R)$. Thus, $|\msS(R)| = |J:(J \cap Z(R))| = |J|$.
\end{proof}

We wish to determine the covering number of $R$, and to classify exactly when $R$ is a $\sigma$-elementary ring. It is easy to check that the latter condition holds exactly when $\sigma(R) < \sigma(S)$.

\begin{lem}\label{AGL:sigmaelem}
The ring $R$ is $\sigma$-elementary if and only if $\sigma(R) < \sigma(S)$.
\end{lem}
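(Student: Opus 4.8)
The plan is to prove the two implications separately, using the identification $R/J \cong S$ together with the structural fact that $J = \msJ(R)$ is contained in every nonzero two-sided ideal of $R$. The forward implication is immediate: if $R$ is $\sigma$-elementary, then since $J$ is a nonzero two-sided ideal (it has order $q^n \ge 2$), the defining inequality applied to $I = J$ gives $\sigma(R) < \sigma(R/J) = \sigma(S)$.

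For the converse, I assume $\sigma(R) < \sigma(S)$ and let $I$ be an arbitrary nonzero two-sided ideal of $R$; the goal is to show $\sigma(R) < \sigma(R/I)$. The key step is to prove $J \subseteq I$. Granting this, $I/J$ is a two-sided ideal of $R/J$ with $(R/J)/(I/J) \cong R/I$, so Lemma \ref{lem:basics}(3) applied inside $R/J \cong S$ yields $\sigma(S) = \sigma(R/J) \le \sigma(R/I)$. Combined with the hypothesis this gives $\sigma(R) < \sigma(S) \le \sigma(R/I)$, as required; the extreme cases $I = J$ and $I = R$ (where $R/I = \{0\}$ and $\sigma(\{0\}) = \infty$) are subsumed by the same chain of inequalities.

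It therefore remains to show that every nonzero two-sided ideal $I$ contains $J$. First, $I \cap J$ is a two-sided ideal of $R$ contained in $J$; since $J^2 = \{0\}$, the $R$-action on $I \cap J$ factors through $S$, so $I \cap J$ is an $(S_1, S_2)$-subbimodule of $J$. As $J$ is a simple bimodule, it suffices to prove $I \cap J \ne \{0\}$, for then $I \cap J = J$. To see this, take any nonzero $y \in I$ and write $y = s_1 + s_2 + x$ with $s_i \in S_i$ and $x \in J$. Letting $e_1, e_2$ denote the block idempotents of $S_1, S_2$, a direct computation gives $e_1 y e_2 = x$ (the terms $e_1 s_1 e_2$ and $e_1 s_2 e_2$ vanish by orthogonality of $e_1, e_2$), so if $x \ne 0$ then $x \in I \cap J$ is nonzero. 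Otherwise $y = s_1 + s_2$ with some $s_i \ne 0$, and multiplying $y$ by a suitable element of $J$ on the appropriate side produces a nonzero element of $I \cap J$: if $s_1 \ne 0$, then $s_1$ is a nonzero $\F_{q_1}$-matrix, so right multiplication by $x = \left(\begin{smallmatrix}0 & w\\0&0\end{smallmatrix}\right)$ with $s_1 w \ne 0$ gives $yx \ne 0$ in $I \cap J$; if $s_1 = 0$, then $s_2 \ne 0$ lies in the field $\F_{q_2}$, and left multiplication by such an $x$ with $w \ne 0$ gives $xy \ne 0$ in $I \cap J$.

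The main obstacle is precisely this last step, namely showing that $J$ meets every nonzero two-sided ideal, equivalently that $J$ is the unique minimal nonzero two-sided ideal of $R$. Everything else is a formal consequence of Lemma \ref{lem:basics}(3) and the isomorphism $R/J \cong S$; the genuine content lies in the explicit bimodule computation establishing $I \cap J \ne \{0\}$, which relies on the simplicity of $J$ as an $(S_1, S_2)$-bimodule and the orthogonality $e_1 e_2 = e_2 e_1 = 0$ of the block idempotents.
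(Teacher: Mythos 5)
Your proof is correct and follows essentially the same route as the paper: both arguments reduce to the fact that every nonzero two-sided ideal of $R$ contains $J$, so that every proper quotient of $R$ is a quotient of $S$ and hence $\sigma(S) \le \sigma(R/I)$. The paper gets this by simply enumerating the nonzero proper ideals as $J$, $S_1 \oplus J$, $S_2 \oplus J$ (using simplicity of $S_1$, $S_2$ and $J^2 = \{0\}$), whereas you verify the containment $J \subseteq I$ directly via the simplicity of $J$ as an $(S_1,S_2)$-bimodule; the substance is the same.
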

\begin{proof}
Because $S_1$ and $S_2$ are simple rings and $J^2 = \{0\}$, the only nonzero, proper two-sided ideals of $R$ are $J$, $S_1 \oplus J$, and $S_2 \oplus J$. The corresponding residue rings of $R$ are isomorphic to $S$, $S_2$, and $S_1$, respectively. The result follows after noting that $\sigma(S) \le \min\{\sigma(S_1), \sigma(S_2)\}$.
\end{proof}

Next, we give a proposition that fundamentally determines the covering number of $R$. For $x \in J$, let $C_S(x) := \{s \in S : sx=xs\}$ be the centralizer of $x$ in $S$.

\begin{prop}\label{prop:AGLcover}
Let $\mcM$ be a minimal set of maximal subrings of $S$ such that
\begin{equation*}
\bigcup_{x \in J\backslash \{0\}} C_S(x) \subseteq \bigcup_{M \in \mcM} M,
\end{equation*}
and define 
\begin{equation*}
\msZ := \{ M \oplus J : M \in \mcM\}.
\end{equation*}
Then,
\begin{enumerate}[(1)]
\item $\msS(R) \cup \msZ$ is a cover of $R$.
\item If $R$ is $\sigma$-elementary, then $\msS(R) \cup \msZ$ is a minimal cover of $R$.
\item $|\msZ| \le (|J|-1)/(q-1) = (q^{n}-1)/(q-1)$.
\item If $R$ is $\sigma$-elementary, then $q^n + 1 \le \sigma(R) \le (q^{n+1}-1)/(q-1)$.
\end{enumerate}
\end{prop}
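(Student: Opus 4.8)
The plan is to treat the two families $\msS(R)$ and $\msZ$ as covering two complementary types of elements, using the unique decomposition $r = s + x$ with $s \in S$ and $x \in J$ coming from $R = S \oplus J$. The basic computation, already recorded before the proposition, is that $S^{1+y} = \{t + (ty - yt) : t \in S\}$; matching $S$- and $J$-components shows that $r = s+x \in S^{1+y}$ exactly when $x = sy - ys$. Thus, writing $\phi_s\colon J \to J$ for the additive map $y \mapsto sy - ys$ (whose kernel is $C_J(s) := \{y \in J : sy = ys\}$), the element $r = s+x$ lies in $\bigcup_{y \in J} S^{1+y}$ if and only if $x \in \operatorname{im}\phi_s$. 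On the other hand, each member of $\msZ$ contains $J$, so $r = s+x \in M \oplus J$ if and only if $s \in M$. This dichotomy drives the whole argument.

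For part (1), suppose $r = s+x$ is not covered by $\msZ$, i.e.\ $s \notin \bigcup_{M \in \mcM} M$. By the defining containment for $\mcM$ this forces $s \notin C_S(x')$ for every nonzero $x' \in J$, equivalently $C_J(s) = \{0\}$. Hence $\phi_s$ is injective, and since $J$ is finite it is bijective; in particular $x \in \operatorname{im}\phi_s$, so $r \in S^{1+y}$ for a suitable $y$. Every element is therefore covered, giving (1).

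For part (2) I would first pin down the maximal subrings of $R$. Since $J$ is a simple $(S_1,S_2)$-bimodule, the only two-sided ideals of $R$ contained in $J$ are $\{0\}$ and $J$; combined with Lemma~\ref{lem:MaxSubringClassification}, this shows every maximal subring of $R$ is either a complement $S^{1+y} \in \msS(R)$ (these meet $J$ trivially, and $\{0\}$ is then maximal among proper subideals inside $J$) or of the form $M \oplus J$ with $M$ a maximal subring of $S$. By Lemma~\ref{lem:basics}(4) we may take a minimal cover $\mcC$ to consist of such maximal subrings, say $\mcC = \mcC_1 \sqcup \mcC_2$ with $\mcC_1 \subseteq \msS(R)$ and $\mcC_2 = \{M \oplus J : M \in \mcM'\}$. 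Because $R$ is $\sigma$-elementary, $\sigma(R) < \sigma(S) = \sigma(R/J)$ (Lemma~\ref{AGL:sigmaelem}), so Lemma~\ref{lem:SigmaElementary}(2) applied to each maximal complement $S^{1+y}$ forces $\msS(R) \subseteq \mcC$, whence $\mcC_1 = \msS(R)$. The crux is then the lower bound on $\mcC_2$: for any $s \in \bigcup_{x \neq 0} C_S(x)$ we have $C_J(s) \neq \{0\}$, so $\operatorname{im}\phi_s \subsetneq J$ and some $x$ makes $s+x$ uncovered by $\msS(R)$; covering it via $\mcC_2$ requires $s \in \bigcup_{M \in \mcM'} M$. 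Hence $\bigcup_{x \neq 0} C_S(x) \subseteq \bigcup_{M \in \mcM'} M$, and the minimality of $\mcM$ gives $|\mcC_2| \ge |\mcM| = |\msZ|$. Since $\msS(R)$ avoids $J$ while $\msZ$ contains it, the union is disjoint, so $|\mcC| \ge |\msS(R)| + |\msZ| = |\msS(R) \cup \msZ|$, which together with (1) yields minimality.

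Finally, for (3) I would use that $J$ is naturally an $\F_q$-vector space whose scalar action commutes with left and right multiplication by $S$ (left multiplication by $M_n(q_1)$ and right multiplication by $\F_{q_2}$ are $\F_q$-linear), so $C_S(\lambda x) = C_S(x)$ for every $\lambda \in \F_q^\times$. Thus the centralizers $C_S(x)$ are indexed by the points of the projective space $\mathbb{P}(J)$, of which there are $(|J|-1)/(q-1) = (q^n - 1)/(q-1)$. By Lemma~\ref{lem:Jcenter} each nonzero $x$ gives $C_S(x) \subsetneq S$, hence $C_S(x)$ lies in some maximal subring of $S$; choosing one such subring per projective point covers $\bigcup_{x \neq 0} C_S(x)$ with at most $(q^n-1)/(q-1)$ maximal subrings, so $|\msZ| = |\mcM| \le (q^n-1)/(q-1)$. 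For (4), part (2) gives $\sigma(R) = |\msS(R)| + |\msZ| = q^n + |\msZ|$ (using $|\msS(R)| = |J| = q^n$). The lower bound $|\msZ| \ge 1$ holds because $\msS(R)$ alone never covers $J \setminus \{0\}$ (take $s = 0$), and the upper bound follows from (3) and the identity $q^n + (q^n-1)/(q-1) = (q^{n+1}-1)/(q-1)$. I expect the genuinely delicate point to be the minimality in (2)---verifying that covering the ``off-diagonal'' elements $s+x$ forces exactly the covering condition defining $\mcM$---while the other parts are largely mechanical once the $\phi_s$/centralizer dictionary is in hand.
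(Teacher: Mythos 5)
Your proof is correct and follows essentially the same route as the paper: the decomposition $R = S \oplus J$, the observation that $s+x$ lies in $\bigcup_{y}S^{1+y}$ exactly when $x$ is in the image of $y \mapsto sy-ys$, Lemma~\ref{lem:MaxSubringClassification} to identify the remaining maximal subrings as $M \oplus J$, Lemma~\ref{lem:SigmaElementary}(2) to force all complements into a minimal cover, and the projective count $C_S(cx)=C_S(x)$ for part~(3). If anything, your write-up makes explicit a few points the paper leaves implicit (that $C_S(x)\subsetneq S$ for $x\neq 0$, and the reduction of a minimal cover to $\mcC_1 \sqcup \mcC_2$), but the substance is identical.
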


\begin{proof}
(1) We first show that adding in the elements of $\msZ$ to those of $\msS(R)$ constitutes a cover. We know that $\msS(R) = \{S^{1+x}: x \in J\}$,
and so \[ \bigcup_{S_0 \in \msS(R)} S_0 = S^{1+J}.\]
Moreover, since $s^{1 + x} = s + (sx - xs) \subseteq s + J$, it suffices to cover the elements $s$ for which $s^{1 + J} \subsetneq s + J$ by maximal subrings of $S$.  Note that since $0_S, 1_S \in C_S(x)$ for any $x \in J$, $S^{1 + J} \subsetneq R$, so the maximal subrings in $\msS(R)$ alone do not constitute a cover.
 
Suppose $s^{1 + J} \subsetneq s + J$ for a fixed $s \in S$.  Since $|s + J| = |J|$, this implies that $s^{1 + x} = s^{1 + y}$ for some $x \neq y \in J$.  However, this means $s^{1 + (x - y)} = s$ and $s \in C_S(x-y)$.  Conversely, if $s \notin \bigcup_{x \in J\backslash \{0\}} C_S(x)$, then $|s^{1+J}| = |J| = |s + J|$, which implies that $s + J = s^{1 + J} \subseteq S^{1 + J}$.  By Lemma \ref{lem:MaxSubringClassification}, every maximal subring of $R$ not contained in $\msS(R)$ is of the form $T \oplus J$, where $T$ is a maximal subring of $S$.  Thus, extending $\msS(R)$ to a cover of $R$ amounts precisely to finding a set $\mcM$ of maximal subrings of $S$ such that
 \[ \bigcup_{x \in J\backslash \{0\}} C_S(x) \subseteq \bigcup_{M \in \mcM} M.\]  That $\msS(R) \cup \msZ$ is a cover follows.
 
(2) Assume that $R$ is $\sigma$-elementary. Then, $\sigma(R) < \sigma(S)$.  By Lemma \ref{lem:SigmaElementary} (2), every complement to $J$ must be contained in any minimal cover of $R$.  Thus, by the arguments above, extending $\msS(R)$ to a minimal cover of $R$ amounts to finding a minimal cover of $\bigcup_{x \in J\backslash \{0\}} C_S(x)$.

(3) Note that $C_S(cx) = C_S(x)$ whenever $c \in \F_q^\times$. This means that there are at most $(|J|-1)/(q-1)$ subrings $C_S(x)$ of $S$. Hence, $|\msZ| \le (|J|-1)/(q-1)$. 

(4) By Lemma \ref{lem:Jcenter}, $|\msS(R)| = |J| = q^n$. As in part (2), each subring in $\msS(R)$ is contained in every minimal cover of $R$ because $R$ is $\sigma$-elementary. But, every conjugate of $S$ contains $0$, so the union of all the subrings in $\msS(R)$ has cardinality at most
\begin{equation*}
|\msS(R)|(|S|-1)+1 = |J|(|S|-1) + 1< |R|.
\end{equation*}
Thus, using part (1) and part (3),
\begin{equation*}
|\msS(R)|+1 = q^n+1 \le \sigma(R) \le q^n + \frac{q^n-1}{q-1} = \frac{q^{n+1}-1}{q-1},
\end{equation*}
which completes the proof.
\end{proof}

When $n =1$ or $n=2$, we can determine the covering number of $R$ and describe when $R$ is $\sigma$-elementary by comparing the bound for $\sigma(R)$ in Proposition \ref{prop:AGLcover}(4) with the value of $\sigma(S)$. For larger values of $n$, we will construct a cover of the union of all the centralizers $C_S(x)$ (see Proposition \ref{prop:AGLn>1cover}), which will allow us to find $\sigma(R)$.

Since $R/(S_2 \oplus J) \cong S_1 \cong M_n(q_1)$, $\sigma(M_n(q_1))$ provides an upper bound for $\sigma(R)$. When $n \ge 2$, $M_n(q_1)$ is noncommutative, and hence is coverable (in fact, being a simple ring, $M_n(q_1)$ is $\sigma$-elementary), and $\sigma(M_n(q_1))$ was determined in \cite{LucchiniMarotiARXIV, Crestani}. The formula for $\sigma(M_n(q_1))$ is somewhat complicated and will be cited below. For our purposes it will be enough to have an upper bound on this covering number.

\begin{lem}\label{lem:bounds}
Let $n \ge 2$, let $a$ be the smallest prime divisor of $n$, and let $d \ge n - (n/a)$. Then, for every prime power $q$, $\sigma(M_n(q)) \le q^{nd}$, with equality only when $n=q=2$ and $d=1$.
\end{lem}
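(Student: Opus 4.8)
The plan is to exhibit an explicit cover of $M_n(q)$ by proper $\F_q$-subalgebras and then estimate its size; equivalently, one could cite the closed formula for $\sigma(M_n(q))$ from \cite{LucchiniMarotiARXIV, Crestani} and bound it directly. I would first split the elements of $M_n(q)$ according to whether they admit a nontrivial invariant subspace. An element $A$ has none precisely when its characteristic polynomial is irreducible of degree $n$, in which case $\F_q[A] \cong \F_{q^n}$; call these elements \emph{irreducible}. Every other element lies in the stabilizer $P_W = \{B : BW \subseteq W\}$ of some proper nonzero subspace $W$, and each $P_W$ is a proper $\F_q$-subalgebra. Since $\F_{q^n} \supseteq \F_{q^a}$, every irreducible element centralizes a copy of $\F_{q^a}$ and so lies in the proper subalgebra $C_{M_n(q)}(\F_{q^a}) \cong M_{n/a}(q^a)$. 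Hence
\[
\mcC = \{P_W : W \text{ a proper nonzero subspace}\} \cup \{\text{subalgebras} \cong M_{n/a}(q^a)\}
\]
is a cover, which yields
\[
\sigma(M_n(q)) \le \sum_{k=1}^{n-1}\binom{n}{k}_q + N_a, \qquad N_a := \frac{|\GL_n(q)|}{a\,|\GL_{n/a}(q^a)|},
\]
where $N_a$ counts the field-extension subalgebras (their normalizer in $\GL_n(q)$ is the group $\Gamma\mathrm{L}_{n/a}(q^a)$ of $\F_{q^a}$-semilinear maps, of order $a\,|\GL_{n/a}(q^a)|$).

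Next I would identify $N_a$ as the dominant term. Using the order formula for $\GL$ one gets $N_a = \tfrac1a\, q^{\,n^2(1-1/a)}\,\rho$, where $\rho = \prod_{i=1}^n (1-q^{-i}) \big/ \prod_{i=1}^{n/a}(1-q^{-ai}) < 1$ because the denominator factors form a proper sub-collection of the numerator factors. Writing $d_{\min} := n - n/a$, we have $n^2(1-1/a) = n\,d_{\min}$, so $N_a < \tfrac1a q^{\,n d_{\min}} \le \tfrac12 q^{\,n d_{\min}}$. For the subspace-stabilizer sum I would use the estimate $\binom{n}{k}_q < C_q\, q^{k(n-k)}$ with $C_q := \prod_{j\ge 1}(1-q^{-j})^{-1}$ bounded, together with $k(n-k) \le \lfloor n^2/4\rfloor$ and $n\,d_{\min} = n^2(a-1)/a \ge n^2/2$. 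Thus the sum has strictly smaller exponential order than $q^{\,n d_{\min}}$, and combining the two bounds gives $\sigma(M_n(q)) < q^{\,n d_{\min}} \le q^{\,nd}$ for every $d \ge d_{\min}$, provided $n$ is large enough that the polynomial factor $(n-1)$ is absorbed by the exponential gap.

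The main obstacle — and the only place real care is needed — is the handful of small even cases where $a = 2$ makes the gap exponent only $n^2/4$, so the crude comparison is too lossy; tracing the estimate shows these are $n = 2$ (all $q$) and $(n,q) = (4,2)$. For $(4,2)$ I would bound the three relevant $q$-binomial coefficients explicitly rather than through $C_q$, obtaining $\sigma(M_4(2)) \le N_2 + \binom41_2 + \binom42_2 + \binom43_2 < 2^8$ directly. The case $n = 2$ also carries the unique equality: here the displayed cover is in fact minimal (equivalently, one reads the value off the formula of \cite{LucchiniMarotiARXIV, Crestani}), giving
\[
\sigma(M_2(q)) = (q+1) + \frac{q(q-1)}{2} = \frac{q^2+q+2}{2}.
\]
This is $\le q^{2d}$ for every $d \ge 1 = d_{\min}$, with equality forcing $d = 1$ and $q^2+q+2 = 2q^2$, i.e.\ $(q-2)(q+1) = 0$; so equality holds exactly when $q = 2$. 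For every $n \ge 3$ the estimates above are strict, so no further equality can occur, and the proof is complete.
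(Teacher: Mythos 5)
Your argument is correct, and its skeleton is the same as the paper's: both reduce to bounding a ``field-extension'' term and a ``subspace-stabilizer'' term separately against $q^{nd_{\min}}$ with $d_{\min} = n - n/a$, and both must then dispose of a short list of small cases by hand (the paper inspects $2 \le n \le 4$ and treats $q=2$ separately inside its pairing argument; you isolate $n=2$ and $(n,q)=(4,2)$). Indeed your $N_a$ is literally the first term $\tfrac{1}{a}\prod_{k=1,\, a \nmid k}^{n-1}(q^n - q^k)$ of the cited formula for $\sigma(M_n(q))$, and your bound $N_a < \tfrac{1}{a}q^{nd_{\min}}$ is the paper's observation that this product has $d_{\min}$ factors, each less than $q^n$. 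The genuine differences are two. First, you obtain the upper bound from an explicit cover (all stabilizers $P_W$ together with the centralizers of copies of $\F_{q^a}$) instead of quoting the closed formula of Lucchini--Mar\'oti and Crestani; this is more self-contained, at the mild cost of summing $\binom{n}{k}_q$ over all $1 \le k \le n-1$ rather than only over $k \le n/2$ with $a \nmid k$, and you still lean on the exact value of $\sigma(M_2(q))$ (or minimality of your cover for $n=2$) to settle the equality case --- though strictly speaking only the upper bound $\tfrac{1}{2}(q^2+q+2)$ is needed to show equality forces $q=2$, $d=1$. Second, for the $q$-binomial sum the paper applies Cauchy's $q$-binomial theorem and pairs the factors of $\prod_{k=0}^{n-1}(1+q^k)$ to get $\tfrac{1}{2}q^{n^2/2}$, whereas you use $\binom{n}{k}_q < C_q\, q^{k(n-k)} \le C_q\, q^{\lfloor n^2/4 \rfloor}$; your exponent is sharper but carries the prefactor $(n-1)C_q$, so the comparison is tight not only in your two flagged cases but also at $(n,q)=(3,2)$ (where $2(n-1)C_2 \approx 13.85$ against a gap of $q^{nd_{\min} - \lfloor n^2/4\rfloor} = 16$) --- it does pass, but it deserves the same explicit mention you give $(4,2)$. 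With that caveat the proof is complete and reaches the same conclusion by essentially the same route.
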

\begin{proof}
By \cite[Section 7]{LucchiniMarotiARXIV} and \cite[Theorem A]{Crestani},
\begin{equation*}
\sigma(M_n (q)) = \frac{1}{a} \prod_{k=1,\\ a \nmid k}^{n-1} (q^n - q^k) + \sum_{k=1,\\ a \nmid k}^{\lfloor n/2 \rfloor} {n \choose k}_q.
\end{equation*}
It suffices to prove the lemma for $d = n - n/a$. When $2 \le n \le 4$, the inequality can be verified by inspection using the above formula. Note that when $n=2$, we have $\sigma(M_2(q)) = \tfrac{1}{2}(q^2+q+2)$, which is less than or equal to $q^2$ for all $q \ge 2$, and is exactly $q^2$ when $q=2$. So, assume that $n \ge 5$.

Now, since $d$ is the number of factors in $\prod_{k=1,\\ a \nmid k}^{n-1} (q^n - q^k)$, we certainly have 
\begin{equation*}
\frac{1}{a} \prod_{k=1,\\ a \nmid k}^{n-1} (q^n - q^k) < \frac{1}{2}(q^n)^d.
\end{equation*}
So, it suffices to bound $\sum_{k=1,\\ a \nmid k}^{\lfloor n/2 \rfloor} {n \choose k}_q$ by $\tfrac{1}{2}q^{nd}$. Note that since $a \ge 2$, $nd \ge (n^2)/2$. We will show that the aforementioned sum is bounded above by $\tfrac{1}{2}q^{(n^2)/2}$.

Assume first that $q > 2$. One form of the $q$-Binomial Theorem (also known as Cauchy's Binomial Theorem) states that
\begin{equation*}
\prod_{k=0}^{n-1} (1+q^k) = \sum_{k=0}^n q^{k(k-1)/2}\binom{n}{k}_q.
\end{equation*}
Using this and recalling that $n \ge 5$,
\begin{equation*}
\sum_{k=1,\\ a \nmid k}^{\lfloor n/2 \rfloor} {n \choose k}_q < \frac{1}{2}\sum_{k=0}^n q^{k(k-1)/2} \binom{n}{k}_q = \frac{1}{2}\prod_{k=0}^{n-1} (1+q^k).
\end{equation*}

Let $P = \prod_{k=0}^{n-1} (1+q^k)$. For all $1 \le k \le \lfloor n/2 \rfloor -1$, we see that
\begin{equation}\label{q^n ineq}
(1+q^k)(1+q^{n-1-k}) \le \sum_{k=0}^{n-1} q^k < q^n.
\end{equation}
Also, when $k=0$, $(1+q^k)(1+q^{n-1}) = 2(1+q^{n-1})$, which is less than $q^n$ because $q \ne 2$. Thus, if $n$ is even then $P$ consists of $n/2$ product pairs $(1+q^k)(1+q^{n-1-k})$, each of which is less than $q^n$. If $n$ is odd, then there are $\lfloor n/2 \rfloor - 1$ such pairs, along with one factor $1+q^{\lfloor n/2 \rfloor}$, which is less than $q^{n/2}$ because $n \ge 5$. Hence, $P < (q^{n/2})^n$, and we are done.

In the case where $q=2$, \eqref{q^n ineq} still holds, but $2(1+q^{n-1}) \not\le q^n$. To correct for this, we group together the first two and last two factors of $P$ and note that
\begin{align*}
(1+1)(1+2)(1+2^{n-2})(1+2^{n-1}) &= 6(1+2^{n-2}+2^{n-1}+2^{2n-3})\\
&\le 6(1 + \tfrac{1}{16}\cdot 2^{2n-3} + \tfrac{1}{8}\cdot 2^{2n-3} + 2^{2n-3})\\
&< 6(\tfrac{5}{4} \cdot 2^{2n-3}) < 2^{2n}.
\end{align*}
The argument then proceeds as before, and we obtain the desired inequality.
\end{proof}

\begin{thm}\label{thm:smalln}
Let $d$ be such that $q=q_1^d$. When $n \ge 2$, let $a$ be the smallest prime divisor of $n$.
\begin{enumerate}
\item If $n=1$, then $R$ is $\sigma$-elementary if and only if $(q_1, q_2) \ne (2,2)$ or $(4,4)$. If $(q_1,q_2)=(2,2)$, then $\sigma(R)=3$. If $(q_1,q_2)=(4,4)$, then $\sigma(R)=4$. Otherwise, $\sigma(R)=q+1$.
\item If $n \ge 2$ and $d \ge n-(n/a)$, then $R$ is not $\sigma$-elementary, and $\sigma(R) = \sigma(M_n(q_1))$. In particular, $R$ is not $\sigma$-elementary when $n=2$.
\end{enumerate}
\end{thm}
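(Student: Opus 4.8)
The plan is to run everything through Lemma \ref{AGL:sigmaelem}: since $R$ is $\sigma$-elementary exactly when $\sigma(R) < \sigma(S)$, each part reduces to determining $\sigma(S)$ for $S = S_1 \oplus S_2$ and comparing it with the bounds on $\sigma(R)$ coming from Proposition \ref{prop:AGLcover}(4). For part (1) we have $S \cong \F_{q_1} \times \F_{q_2}$ and $q^n = q$, so the lower and upper bounds of Proposition \ref{prop:AGLcover}(4) coincide at $(q^2-1)/(q-1) = q+1$; hence $\sigma(R) = q+1$ the moment $R$ is $\sigma$-elementary, and the whole question becomes whether $S$ is coverable and, if so, what $\sigma(S)$ is. By Lemma \ref{lem:basics}(1), $S$ is coverable iff it is not generated by a single element. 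A non-unit of $\F_{q_1}\times\F_{q_2}$ generates a proper subring, so I would show $S$ is singly generated precisely when there exist units $a \in \F_{q_1}$, $b \in \F_{q_2}$ that generate their respective fields over $\F_p$ and have distinct (hence coprime) minimal polynomials. Writing $q_i = p^{d_i}$, this always succeeds when $d_1 \ne d_2$ (the minimal polynomials then have different degrees), and when $d_1 = d_2 = d$ it succeeds iff at least two monic irreducibles of degree $d$ over $\F_p$ arise as minimal polynomials of units. This fails only for $(q_1,q_2)=(2,2)$ (here $d=1$, $p=2$, and $t-1$ is the only admissible linear polynomial) and $(q_1,q_2)=(4,4)$ (here $t^2+t+1$ is the only irreducible quadratic over $\F_2$). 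In all other cases $S$ is not coverable, so $\sigma(S)=\infty>\sigma(R)$ and $R$ is $\sigma$-elementary with $\sigma(R)=q+1$.

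For the two exceptional pairs I would compute $\sigma(S)$ directly and then rule out $\sigma$-elementarity via Proposition \ref{prop:AGLcover}(4). One checks $\sigma(\F_2\times\F_2)=3$ (the three nonzero proper subrings are each forced) and $\sigma(\F_4\times\F_4)=4$; the latter needs a brief classification of the subrings of $\F_4\times\F_4$ — using that a subdirect product of two fields with both projections surjective is the whole product — to see that the only order-$8$ subrings are $\F_2\times\F_4$ and $\F_4\times\F_2$, followed by the observation that the four elements missed by these two cannot all lie in a single proper subring. With $\sigma(S)$ known, suppose $R$ were $\sigma$-elementary: Proposition \ref{prop:AGLcover}(4) would give $\sigma(R)=q+1$, namely $3$ for $(2,2)$ and $5$ for $(4,4)$. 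Comparing with $\sigma(S)=3$ and $\sigma(S)=4$ respectively contradicts $\sigma(R)<\sigma(S)$ in both cases, so $R$ is not $\sigma$-elementary; since the only finite proper quotient of $R$ is $S$, we obtain $\sigma(R)=\sigma(S)$, equal to $3$ and $4$.

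For part (2) we have $S \cong M_n(q_1)\times \F_{q_2}$, and the key preliminary is $\sigma(S)=\sigma(M_n(q_1))$. The inequality $\sigma(S)\le\sigma(M_n(q_1))$ follows by lifting a cover of $M_n(q_1)$. For the reverse, fix a generator $g$ of $\F_{q_2}$ over $\F_p$ and look at the elements $(m,g)$ with $m\in M_n(q_1)$: any proper subring containing one of these projects onto all of $\F_{q_2}$ (since $g$ generates), and as $M_n(q_1)$ is simple no proper subdirect product exists, so its first projection is a \emph{proper} subring of $M_n(q_1)$. The first projections of the cover members meeting the set $\{(m,g) : m \in M_n(q_1)\}$ therefore cover $M_n(q_1)$, which forces at least $\sigma(M_n(q_1))$ subrings. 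Now $\sigma(R)\le\sigma(M_n(q_1))$ by lifting a cover of $R/(S_2\oplus J)\cong M_n(q_1)$, and Lemma \ref{lem:bounds} (applied with $d\ge n-(n/a)$) gives $\sigma(M_n(q_1))\le q_1^{nd}=q^n$. Were $R$ $\sigma$-elementary, Proposition \ref{prop:AGLcover}(4) would force $\sigma(R)\ge q^n+1$, contradicting $\sigma(R)\le q^n$. Hence $R$ is not $\sigma$-elementary, so $\sigma(R)=\sigma(S)=\sigma(M_n(q_1))$; the case $n=2$ is automatically covered because then $a=2$ and $n-(n/a)=1\le d$.

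I expect the main obstacle to be the exact determination of $\sigma(S)$ in the exceptional cases of part (1), above all $\sigma(\F_4\times\F_4)=4$, which requires the subring classification and a concrete covering argument, together with the bookkeeping in the single-generator criterion (the split among $d_1\ne d_2$, $d=1$, and $d\ge 2$, and the attendant counting of irreducible polynomials over $\F_p$). Part (2) is comparatively clean once $\sigma(S)=\sigma(M_n(q_1))$ is in place, since the contradiction between the lower bound of Proposition \ref{prop:AGLcover}(4) and the upper bound from Lemma \ref{lem:bounds} then does all of the remaining work.
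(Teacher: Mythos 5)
Your argument is correct and follows the same skeleton as the paper's proof: reduce $\sigma$-elementarity to the comparison $\sigma(R) < \sigma(S)$ via Lemma \ref{AGL:sigmaelem}, note that for $n=1$ the two bounds in Proposition \ref{prop:AGLcover}(4) collapse to $q+1$, and for part (2) derive the contradiction $q^n + 1 \le \sigma(R) \le \sigma(M_n(q_1)) \le q_1^{nd} = q^n$ from Proposition \ref{prop:AGLcover}(4) and Lemma \ref{lem:bounds}, then compute $\sigma(R)$ as the minimum of $\sigma$ over the three proper quotients $S$, $S_1$, $S_2$. The only real difference is that where the paper imports three facts by citation --- the coverability criterion for $\F_{q_1}\oplus\F_{q_2}$ (\cite[Corollary 3.8]{Werner}), the values $\sigma(\F_2\oplus\F_2)=3$ and $\sigma(\F_4\oplus\F_4)=4$ (\cite[Theorem 1.2]{CaiWerner}), and the identity $\sigma(M_n(q_1)\oplus\F_{q_2})=\sigma(M_n(q_1))$ (\cite[Proposition 5.4]{PeruginelliWerner}) --- you re-derive them: the single-generator analysis via distinct irreducible minimal polynomials, the hands-on subring classification of $\F_4\oplus\F_4$, and the Goursat-style subdirect-product argument using simplicity of $M_n(q_1)$ to push a cover of $S$ down to a cover of $M_n(q_1)$. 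These in-lined arguments are sound (the subdirect-product step implicitly uses that $M_n(q_1)$ is not isomorphic to a quotient of $\F_{q_2}$, which holds since $n \ge 2$), at the cost of some bookkeeping the citations would spare you; the one cosmetic slip is the phrase ``the only finite proper quotient of $R$ is $S$'' --- the proper quotients are $S$, $S_1$, and $S_2$, and you need $\sigma(S) \le \min\{\sigma(S_1),\sigma(S_2)\}$ (which holds, as $S_1$ and $S_2$ are quotients of $S$) to conclude $\sigma(R)=\sigma(S)$.
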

\begin{proof}
(1) When $n=1$, $S \cong \F_{q_1} \oplus \F_{q_2}$ is coverable if and only if $(q_1,q_2)=(2,2)$ or $(4,4)$ \cite[Corollary 3.8]{Werner}. In the former case, $\sigma(R) = \sigma(\F_2 \oplus \F_2)=3$, and in the latter case $\sigma(R) = \sigma(\F_4 \oplus \F_4) = 4$ \cite[Theorem 1.2]{CaiWerner}. In both instances, $R$ is not $\sigma$-elementary. On the other hand, if $S$ is not coverable, then neither $S_1$ nor $S_2$ is coverable. However, $R$ is coverable (since it is a noncommutative ring), and hence is $\sigma$-elementary. By Proposition \ref{prop:AGLcover}(4), $\sigma(R)=q+1$.

(2) Let $n \ge 2$, let $d \ge n-(n/a)$, and suppose that $R$ is $\sigma$-elementary. Then, $q^n = q_1^{nd}$, and by Proposition \ref{prop:AGLcover}(4) and Lemma \ref{lem:bounds},
\begin{equation*}
q_1^{nd}+1 \le \sigma(R) \le \sigma(M_n(q_1)) \le q_1^{nd},
\end{equation*}
a contradiction. So, $R$ cannot be $\sigma$-elementary. Thus, $\sigma(R) = \min\{\sigma(S), \sigma(S_1), \sigma(S_2)\}$. Since $S_2$ is not coverable and $\sigma(S)=\sigma(S_1)$ by \cite[Proposition 5.4]{PeruginelliWerner}, we get $\sigma(R) = \sigma(S_1) = \sigma(M_n(q_1))$. Finally, when $n=2$, $n-(n/a)=1$, so $d \ge n-(n/a)$ and $R$ is not $\sigma$-elementary.
\end{proof}

\begin{Rem}\label{rem:unital1}
In Theorem \ref{thm:smalln} (1) when $R = A(1, q_1, q_2)$ is $\sigma$-elementary, there exists a minimal cover of $R$ as constructed in Proposition \ref{prop:AGLcover} of the form $\msS(R) \cup \{M \oplus J\}$, where $M$ here is the set of all scalar matrices; that is, the cover consists of the complements to $J$ and a single additional subring. Each subring in this cover contains $1_R$. Thus, while it is our convention that subrings of $R$ need not contain $1_R$, this minimal cover of $\sigma(R)$ consists of unital subrings of $R$. 
\end{Rem}

\section{Setup for the general case}\label{sect:setup}

Maintaining the notation used in Section \ref{sect:small}, $R = A(n,q_1,q_2)$, where $n \ge 1$ and $q=q_1 \otimes q_2 = q_1^d$. Theorem \ref{thm:smalln} handles the situations where $n \le 2$, and also any case with $d \ge n-(n/a)$, where $n \ge 2$ and $a$ is the smallest prime divisor of $n$. Consequently, for the remainder of the paper, we will assume that $n \ge 3$ and $d < n-(n/a)$. 

Following Proposition \ref{prop:AGLcover}, we seek a minimal cover by maximal subrings of $S$ of the union of the centralizers $C_S(x)$. By \cite[Theorem 4.5]{PeruginelliWerner}, any maximal subring of $S$ has the form $M_1 \oplus S_2$ or $S_1 \oplus M_2$, where $M_i$ is a maximal subring of $S_i$. If $q_2 > p$, then the maximal subrings of $S_2 \cong \F_{q_2}$ are the maximal subfields of $\F_{q_2}$; otherwise, $\{0\}$ is the only maximal subring of $S_2$. The maximal subrings of $S_1 \cong M_n(q_1)$ were classified in \cite[Theorem 3.3]{PeruginelliWerner}. They fall into three types, which we summarize in the next definition. 

\begin{Def}\label{def:maxsubs}
Let $q$ be a prime power, and $M$ a maximal subring of $M_n(q)$.
\begin{itemize}
\item We call $M$ \textit{Type I-k} if $M$ is the stabilizer of a proper nonzero subspace of $\F_q^n$ of dimension $k$.  That is, for a fixed subspace $W$ of of $\F_q^n$ of dimension $k$,
 \[ M = \{A \in M_n(q) : AW \subseteq W\}.\]
Collectively (or if the value of $k$ is unimportant), we refer to these subrings simply as Type I.

\item We call $M$ \textit{Type II-$\ell$} if $M$ is the centralizer of a minimal degree field extension of $\F_q$ in $M_n(q)$.  That is, for $\ell$ a prime dividing $n$ and $K \subseteq M_n(q)$, $K \cong \F_{q^\ell}$, we have
 \[ M = C_{M_n(q)}(K) = \{A \in M_n(q) : Au = uA \text{ for all } u \in K\}.\]  Collectively (or if the value of $\ell$ is unimportant), we refer to these subrings simply as Type II. Any Type II-$\ell$ maximal subring is isomorphic to $M_{n/\ell}(q^\ell)$.

\item We call $M$ \textit{Type III-r} if $M$ is a $\GL(n,q)$-conjugate of $M_n(r)$, where $\F_r$ is a (nonzero) maximal subfield of $\F_q$.  Collectively (or if there is no need to specify $r$), these subrings are called Type III. Note that Type III maximal subrings exist only when $q > p$.
\end{itemize}
\end{Def}

We now define a specific collection of maximal subrings of $S$, which we will prove is a cover of all the centralizers $C_S(x)$.

\begin{Def}\label{def:C}
Define $\msC$ be the set of maximal subrings of $S$ consisting of:
 \begin{itemize}
  \item all maximal subrings of the form $T_U \oplus \F_{q_2}$, where $U$ is a $d$-dimensional subspace of $\F_{q_1}^n$ and $T_U$ is the set of all elements of $M_n(q_1)$ stabilizing $U$, and
  \item all maximal subrings of the form $M_n(q_1) \oplus \F_{r}$, where $\F_{r}$ is a maximal subfield of $\F_{q_2}$ containing $\F_{q_1} \cap \F_{q_2}$.
 \end{itemize}
\end{Def}

\begin{Rem}\label{rem:unital}
Note that each subring in $\msC$ contains the multiplicative identity $1_R$ of $R$. As $\msC$ will play the role of the collection $\mcM$ in Proposition \ref{prop:AGLcover}, the minimal cover we will construct for $R$ is $\msS(R) \cup \msZ$, where $\msZ = \{M \oplus J : M \in \msC\}$, and each subring in this cover contains $1_R$. Thus, while it is our convention that subrings of $R$ need not contain $1_R$, the minimal cover we will use to calculate $\sigma(R)$ consists of unital subrings of $R$.
\end{Rem}

Recall that $\omega(d)$ is equal to the number of distinct prime divisors of a natural number $d$. Since $d \ge 1$ is such that $q = q_1 \otimes q_2 = q_1^d$, we see that $\omega(d)$ counts the number of maximal subfields of $\F_{q_2}$ containing $\F_{q_1} \cap \F_{q_2}$ (which is the same as the number of maximal subfields of $\F_q$ containing $\F_{q_1}$). If $d=1$, then $q=q_1$ and $\F_{q_2} \subseteq \F_{q_1}$. In this case, no proper subfield of $\F_{q_2}$ contains $\F_{q_1} \cap \F_{q_2}$, and $\omega(1)=0$.

\begin{prop}\label{prop:AGLn>1cover}
Let $n \ge 3$ and let $q = q_1 \otimes q_2 = q_1^d$.  Then $\msC$ is a cover of $\bigcup_{x \in J \backslash\{0\}} C_S(x)$ that has size ${n \choose d}_{q_1} + \omega(d)$.
\end{prop}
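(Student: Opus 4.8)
The plan is to make the centralizers $C_S(x)$ completely explicit and then sort them into the two families that make up $\msC$. Writing an element of $S = M_n(q_1)\oplus \F_{q_2}$ as a pair $(A,\lambda)$ and identifying $J$ with the column space $\F_q^n$, the defining relation $sx = xs$ becomes $Av = \lambda v$, where $v \in \F_q^n$ corresponds to $x$. Thus for $v \neq 0$,
\[
C_S(v) = \{(A,\lambda) \in M_n(q_1)\oplus \F_{q_2} : Av = \lambda v\},
\]
so the task splits into showing that each such centralizer lies inside a member of $\msC$, and then counting $\msC$.

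The central object is the subspace $U_v \subseteq \F_{q_1}^n$ defined by fixing an $\F_{q_1}$-basis $1,\theta,\dots,\theta^{d-1}$ of $\F_q$, writing $v = \sum_i v_i \theta^i$ with $v_i \in \F_{q_1}^n$, and setting $U_v := \spn_{\F_{q_1}}\{v_0,\dots,v_{d-1}\}$; equivalently, $U_v$ is the smallest $\F_{q_1}$-subspace with $v \in U_v \otimes_{\F_{q_1}} \F_q$, so $1 \le \dim U_v \le d$. Comparing the $\theta^i$-coefficients on both sides of $Av = \lambda v$ yields $Av_i = \sum_j c_{ij}v_j$, where $(c_{ij})$ is the matrix of multiplication by $\lambda$ on $\F_q$. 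This shows two things at once: that $A$ maps each $v_i$ into $U_v$, so every $A$ occurring in $C_S(v)$ stabilizes $U_v$; and that $A|_{U_v}$ is annihilated by the minimal polynomial of $\lambda$ over $\F_{q_1}$. The dichotomy is then governed by $\dim U_v$.

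If $\dim U_v = d$, then $U_v$ is a $d$-dimensional subspace of $\F_{q_1}^n$ (note $0 < d < n$ under our hypotheses), and since every $A$ in $C_S(v)$ stabilizes $U_v$ we obtain $C_S(v) \subseteq T_{U_v}\oplus \F_{q_2} \in \msC$. If instead $\dim U_v < d$, I claim the set $\pi_2(C_S(v))$ of $\lambda$ that occur lies in a proper subfield of $\F_{q_2}$. Indeed, because $A|_{U_v}$ satisfies the irreducible minimal polynomial of $\lambda$, the field $\F_{q_1}(\lambda)$ acts on $U_v$, forcing $[\F_{q_1}(\lambda):\F_{q_1}]$ to divide $\dim U_v < d$; hence $\F_{q_1}(\lambda) \subsetneq \F_q$ for every such $\lambda$. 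Since $C_S(v)$ contains $1_S$ and each $(\mu I, \mu)$ with $\mu \in \F_{q_1}\cap \F_{q_2}$, the projection $\pi_2(C_S(v))$ is a subfield of $\F_{q_2}$ containing $\F_{q_1}\cap \F_{q_2}$; were it all of $\F_{q_2}$ it would contain a primitive element $\mu$, for which $\F_{q_1}(\mu) = \F_{q_1}\F_{q_2} = \F_q$, a contradiction. So $\pi_2(C_S(v))$ is proper, hence contained in some maximal subfield $\F_r$ of $\F_{q_2}$ over $\F_{q_1}\cap \F_{q_2}$, giving $C_S(v) \subseteq M_n(q_1)\oplus \F_r \in \msC$.

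For the size count, the first family of $\msC$ is indexed by the $d$-dimensional subspaces $U$ of $\F_{q_1}^n$ (distinct $U$ yielding distinct stabilizers $T_U$, as $U$ is the unique proper nonzero $T_U$-invariant subspace), giving $\binom{n}{d}_{q_1}$ subrings, while the second family is indexed by the maximal subfields of $\F_{q_2}$ containing $\F_{q_1}\cap \F_{q_2}$, of which there are $\omega(d)$ by the discussion preceding the proposition; the two families are disjoint since the first has full $S_2$-part and proper $S_1$-part while the second is the reverse, so $|\msC| = \binom{n}{d}_{q_1} + \omega(d)$. The main obstacle is the field-degree argument in the case $\dim U_v < d$: extracting from the single eigenvector relation $Av = \lambda v$ the divisibility $[\F_{q_1}(\lambda):\F_{q_1}] \mid \dim U_v$, and hence that $\lambda$ cannot generate $\F_q$ over $\F_{q_1}$. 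Everything else is bookkeeping, but this divisibility is precisely what makes the subfield subrings suffice and is the crux of the whole construction.
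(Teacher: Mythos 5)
Your proof is correct, and it takes a somewhat different route from the paper's. The paper argues element-by-element: for $s = h+\beta \in C_S(v)$ it splits on whether $\beta$ lies in a maximal subfield of $\F_{q_2}$ containing $\F_{q_1}\cap\F_{q_2}$, and otherwise uses the eigenvalue relation $hv = v\beta$ to conclude that the degree-$d$ irreducible minimal polynomial of $\beta$ over $\F_{q_1}$ divides the minimal polynomial of $h$, which in turn produces an $h$-invariant $d$-dimensional $\F_{q_1}$-subspace. You instead attach to $v$ itself the coefficient space $U_v$ and split on $\dim U_v$; this yields the strictly stronger conclusion that each centralizer $C_S(v)$ is contained in a \emph{single} member of $\msC$, and it replaces the minimal-polynomial divisibility step by the explicit invariant subspace $U_v$ together with the observation that $U_v$ is a nonzero module over the field $\F_{q_1}[x]/(f)$, forcing $[\F_{q_1}(\lambda):\F_{q_1}] \mid \dim U_v$. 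The one step worth writing out in full is exactly the one you flag as the crux: from $Av_j = \sum_i m_{ij}v_i$ one gets $f(A)\circ\pi = \pi\circ f(M) = 0$ for the surjection $\pi:\F_{q_1}^d\to U_v$, $e_j\mapsto v_j$, where $M$ is the multiplication-by-$\lambda$ matrix, so $f(A|_{U_v})=0$ even though the $v_i$ need not be linearly independent; the divisibility then follows since the image of $\F_{q_1}[x]/(f)$ in $\mathrm{End}(U_v)$ is a field acting on $U_v\neq 0$. The paper's argument is shorter; yours is more explicit, avoids invoking the structure of invariant subspaces attached to irreducible factors of the minimal polynomial, and the ``one subring per centralizer'' refinement could be of independent use.
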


\begin{proof}
Note that $\msC$ has the specified size, because the number of maximal subrings of $S$ of the form $T_U \oplus \F_{q_2}$ is $\binom{n}{d}_{q_1}$, and the number of maximal subrings of the form $M_n(q_1) \oplus \F_{r}$ is equal to the number of such subfields $\F_{r}$, which is $\omega(d)$.

To see that $\msC$ is a cover, suppose $s \in C_S(v)$ for some $v \neq 0$ in $J$, and write $s = h + \beta$, where $h \in M_n(q_1)$ and $\beta \in \F_{q_2}$. Note that $\beta v = 0 = vh$.  If $\beta \in \F_{r}$ for a maximal subfield of $\F_{q_2}$ containing $\F_{q_1} \cap \F_{q_2}$, then $s$ is in a maximal subring of the form $M_n(q_1) \oplus \F_{r}$. Otherwise, $\F_q = \F_{q_1}[\beta]$, and so $\beta$ is the root of a monic irreducible polynomial $m(x)$ of degree $d$ in $\F_{q_1}[x]$. Since $s \in C_S(v)$,
\begin{equation*}
hv = sv =vs =v \beta.
\end{equation*}
Viewing $h$ as a linear transformation on $\F_{q}^n$ and $v$ as a vector in $\F_{q}^n$, we see that $\beta$ is an eigenvalue of $h$. Hence, $\beta$ is a root of the minimal polynomial $\mu(x)$ of $h$. However, $h$ has entries in $\F_{q_1}$, so $\mu(x) \in \F_{q_1}[x]$. Since $m(x)$ is irreducible over $\F_{q_1}$, this means that $m(x)$ divides $\mu(x)$. It follows that there is a $d$-dimensional subspace $U$ of $\F_{q_1}^n$ that is invariant under $h$. In other words, $h \in T_U$, and so $s \in T_U \oplus \F_{q_2}$. The result follows.
\end{proof}

Next, we must prove that $\msC$ is a minimal cover of $\bigcup_{x \in J \backslash\{0\}} C_S(x)$.  To do this, we will prove that $\msC$ is a minimal cover of a proper subset of $\bigcup_{x \in J \backslash\{0\}} C_S(x)$.  We now present a lemma that allows us to establish this minimality.  This lemma is a direct modification of \cite[Lemma 5.1]{GaronziKappeSwartz} (which itself was adapted from \cite[Lemma 3.1]{Swartz}). 

\begin{lem}\label{lem:mincovercriterion}
Let $\Pi$ be a set of elements of $R$, and suppose the following hold.
\begin{enumerate}[(i)]
 \item $I \subseteq I_R$, where $I_R$ is an index set for collections of maximal subrings of $R$, such that each $\mcM_i$ for $i \in I_R$ is a set of maximal subrings of $R$ and $\bigcup_{i \in I_R} \mcM_i$ is a partition of the set of all maximal subrings of $R$.
 \item $\mcD := \bigcup_{i\in I} \mcM_i$ is a cover of $\Pi$.
 \item The elements of $\Pi$ are partitioned among the subrings contained in $\mcD$, each subring in $\mcD$ contains elements of $\Pi$, and $\Pi_i$ is defined to be the set of elements covered by the subrings in $\mcM_i$.
 \item For any $i \in I$ and $j \in I_R$, if $M, M' \in \mcM_j$, then $|M \cap \Pi_i| = |M' \cap \Pi_i|$.
\end{enumerate}
For a maximal subring $M \notin \mcD$, define \[c(M) := \sum\limits_{i \in I}\frac{|M \cap \Pi_i|}{|M_i \cap \Pi_i|}.\]
If $c(M) \le 1$ for all maximal subrings $M \not\in \mcD$, then $\mcD$ is a minimal cover of the elements of $\Pi$.  Moreover, if $c(M) < 1$ for all maximal subrings $M \not\in \mcD$, then $\mcD$ is the unique minimal cover of the elements of $\Pi$ that uses only maximal subrings.
\end{lem}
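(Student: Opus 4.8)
The plan is to establish minimality by a weighting (linear-programming duality) argument: I distribute a total weight of $|\mcD|$ over the elements of $\Pi$ so that every member of $\mcD$ carries weight exactly $1$ while no maximal subring carries weight more than $1$, so that any cover of $\Pi$ must absorb all of the weight and hence contain at least $|\mcD|$ subrings. Concretely, for $\pi \in \Pi$ I would set $w(\pi) := 1/|M_i \cap \Pi_i|$, where $i \in I$ is the unique index with $\pi \in \Pi_i$ (unique because the sets $\Pi_i$ partition $\Pi$ by hypothesis (iii)) and $M_i$ is any fixed member of $\mcM_i$. This is well defined: hypothesis (iv) makes $|M \cap \Pi_i|$ independent of the choice of $M \in \mcM_i$, and since each subring of $\mcD$ meets $\Pi$ — necessarily inside its own block $\Pi_i$ — the denominator is nonzero. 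For a subring $N$ of $R$ I write $w(N) := \sum_{\pi \in N \cap \Pi} w(\pi) = \sum_{i \in I} |N \cap \Pi_i|/|M_i \cap \Pi_i|$, and I note that for $N \notin \mcD$ this is precisely $c(N)$.

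The argument then reduces to three facts. First, the total weight $W := \sum_{\pi \in \Pi} w(\pi)$ equals $|\mcD|$: since within a collection $\mcM_i$ the block $\Pi_i$ is partitioned among the subrings (hypothesis (iii)), one has $|\Pi_i| = |\mcM_i| \cdot |M_i \cap \Pi_i|$, so $\sum_{i \in I} |\Pi_i|/|M_i \cap \Pi_i| = \sum_{i \in I}|\mcM_i| = |\mcD|$. Second, $w(N) = 1$ for every $N \in \mcD$: if $N \in \mcM_j$ with $j \in I$, then any element of $\Pi$ lying in $N$ is covered by $\mcM_j$ and hence lies in $\Pi_j$, so $N \cap \Pi_i = \emptyset$ for $i \ne j$, leaving $w(N) = |N \cap \Pi_j|/|M_j \cap \Pi_j| = 1$ by (iv). Third, $w(N) \le 1$ for every maximal subring $N \notin \mcD$, which is exactly the hypothesis $c(N) \le 1$; since every subring is contained in a maximal one, every subring of $R$ then carries weight at most $1$.

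With these in hand, let $\mcD'$ be any cover of $\Pi$. Because each $\pi \in \Pi$ lies in at least one member of $\mcD'$, we get $\sum_{N \in \mcD'} w(N) \ge W = |\mcD|$, while the third fact gives $\sum_{N \in \mcD'} w(N) \le |\mcD'|$; hence $|\mcD'| \ge |\mcD|$ and $\mcD$ is minimal. For uniqueness, assume $c(N) < 1$ for all maximal $N \notin \mcD$ and let $\mcD'$ be a minimal cover of $\Pi$ by maximal subrings with $\mcD' \ne \mcD$. Having the same (minimum) cardinality as $\mcD$, the cover $\mcD'$ must contain some $N^* \notin \mcD$, with $w(N^*) < 1$; then $\sum_{N \in \mcD'} w(N) < |\mcD'| = |\mcD| = W$, contradicting $\sum_{N \in \mcD'} w(N) \ge W$. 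Thus $\mcD' = \mcD$.

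The only nonroutine point, and hence the main obstacle, is the bookkeeping behind the three facts, where hypotheses (iii) and (iv) are each used twice: once to see that a subring of $\mcD$ deposits no weight into a block belonging to a different collection (so it carries weight exactly $1$), and once to see that each block $\Pi_i$ is spread evenly across $\mcM_i$ (so the total weight is $|\mcD|$). Once these are in place, the minimality and uniqueness conclusions follow immediately from the standard double-counting inequality.
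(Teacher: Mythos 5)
Your proof is correct, and it recasts the paper's argument in dual form. The paper proceeds by an exchange argument: given a competing cover $\mcB$ by maximal subrings, it compares $\mcD \setminus \mcB$ with $\mcB \setminus \mcD$, bounds the number $a_i$ of discarded members of $\mcM_i$ by how much of $\Pi_i$ the new subrings can absorb, and sums the resulting inequalities to get $|\mcD| \le |\mcB|$; the quantity $c(M)$ appears only after interchanging the order of summation. You instead exhibit an explicit weight function $w(\pi) = 1/|M_i \cap \Pi_i|$ on $\Pi$ serving as a dual certificate: total weight $|\mcD|$, each member of $\mcD$ carrying weight exactly $1$, and every other subring carrying weight $c(M) \le 1$. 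The two proofs rest on the same two bookkeeping facts (a subring of $\mcM_j$ with $j \in I$ meets $\Pi$ only inside $\Pi_j$, and $\Pi_i$ is spread evenly over $\mcM_i$), and the arithmetic is identical, but your formulation has a small advantage: since any proper subring of the finite ring $R$ sits inside a maximal one and hence has weight at most $1$, your argument bounds arbitrary covers of $\Pi$ by proper subrings directly, whereas the paper's proof compares only against covers by maximal subrings and leaves the reduction to that case implicit (via Lemma \ref{lem:basics}(4)). Your uniqueness argument under the strict inequality $c(M) < 1$ is also complete and matches what the paper asserts without writing out.
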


\begin{proof}
 The proof is essentially identical to that of \cite[Lemma 5.1]{GaronziKappeSwartz} and \cite[Lemma 3.1]{Swartz} with the word ``group'' changed to the word ``ring,'' but it is included for the sake of completeness.  Let $\mcD$ and $\Pi$ be as in the statement of the lemma, and assume that $c(M) \le 1$ for all maximal subrings not in $\mcD$; the proof with strict inequalities is analogous.  Suppose that $\mcB$ is another cover of the elements of $\Pi$, and let $\mcD' = \mcD \backslash (\mcD \cap \mcB)$ and $\mcB' = \mcB \backslash (\mcD \cap \mcB)$.  The collection $\mcD'$ consists only of subrings from classes $\mathcal{M}_i$, where $i \in I$, and we let $a_i$ be the number of subrings from $\mathcal{M}_i$ in $\mcD'$.  Similarly, the collection $\mcB'$ consists only of subrings from classes $\mathcal{M}_j$, where $j \not\in I$, and we let $b_j$ be the number of subrings from $\mathcal{M}_j$ in $\mcB'$.  Note that since $\mcB$ is a different cover than $\mcD$, we must have $b_j > 0$ for some $j \not\in I$. 

By removing $a_i$ subrings in class $\mcM_i$ from $\mcD$, the new subrings in $\mcB'$ must cover the elements of $\Pi$ that were in these subrings.  Hence, for all $i \in I$,  if $M_k$ denotes a subring in class $\mcM_k$ for each $k$, $$a_i |M_i \cap \Pi_i| \le \sum\limits_{j \not\in I} b_j |M_j \cap \Pi_i|,$$ which in turn implies that, for all $i \in I$, we have $$a_i \le \sum\limits_{j \not\in I} b_j \frac{|M_j \cap \Pi_i|}{|M_i \cap \Pi_i|}.$$  This means that
\begin{align*}
|\mcD'| &= \sum\limits_{i \in I} a_i \le \sum\limits_{i\in I}\sum\limits_{j \not\in I} b_j \frac{|M_j \cap \Pi_i|}{|M_i \cap \Pi_i|} = \sum\limits_{j \not\in I}\sum\limits_{i \in I} b_j \frac{|M_j \cap \Pi_i|}{|M_i \cap \Pi_i|}\\
&= \sum\limits_{j \not\in I}\left(\sum\limits_{i \in I} \frac{|M_j \cap \Pi_i|}{|M_i \cap \Pi_i|}  \right)b_j = \sum\limits_{j \not\in I} c(M_j)b_j \le \sum\limits_{j \not\in I} b_j = |\mcB'|,
\end{align*}
which shows that $$|\mcD| = |\mcD'| + |\mcD \cap \mcB| \le |\mcB'| + |\mcD \cap \mcB| = |\mcB|.$$ Hence, any other cover of the elements of $\Pi$ using only maximal subrings contains at least as many subrings as $\mcD$.  Therefore, $\mcD$ is a minimal cover of $\Pi$.
\end{proof}

While the statement of Lemma \ref{lem:mincovercriterion} can be a bit difficult to parse, the idea is the following: given an appropriate partition of maximal subrings, a set of elements $\Pi$ that is partitioned in a specific way among these maximal subrings, and a potential cover $\mcD$ of the set of elements appropriately derived from the partition of maximal subrings, checking that $\mcD$ is a minimal cover of $\Pi$ amounts to verifying that $c(M) \le 1$ for all maximal subrings not contained in $\mcD$.

We will now construct a set of elements $\Pi$ such that Lemma \ref{lem:mincovercriterion} can be applied to show that $\msC$ is actually a minimal cover.   We start with some terminology from \cite{Crestani}; similar notation and vocabulary can also be found in \cite[Section 7]{LucchiniMarotiARXIV} and \cite{Britnell1}. Let $V$ be an $n$-dimensional vector space over $\F_{q_1}$. Given a positive integer $k$ satisfying $1 \le k < n/2$, we establish a bijection $\phi_k$ from the set $\mcS_k$ of $k$-dimensional subspaces of $V$ to the set $\mcS_{n-k}$ of all $(n-k)$-dimensional spaces of $V$ in such a way that for every $k$-dimensional subspace $U$ of $V$ we have $V = U \oplus \phi_k(U)$.  

For any $k$ and $q$, a Singer cycle in $\GL(k,q)$ is generator of a cyclic subgroup of order $q^k-1$. The basic properties of Singer cycles and the cyclic subgroups they generate that we will use are proved in \cite[pp. 187--189]{Huppert}. All subgroups generated by Singer cycles in $\GL(k,q)$ are conjugate, and the normalizer in $\GL(k,q)$ of a cyclic subgroup generated by a Singer cycle has order $k(q^k-1)$. Hence, the number of subgroups generated by Singer cycles in $\GL(k,q)$ is equal to $|\GL(k,q)|/(k(q^k-1))$.

\begin{Def}\label{def:TypeTk}
Let $1 \le k < n/2$ and let $U$ be a $k$-dimensional subspace of $\F_{q_1}^n$. An element of $M_n(q_1)$ is said to be of \emph{type $T_k$ stabilizing $U$} if---after choosing bases so that $U$ has standard basis $e_1, \ldots, e_k$ and $\phi_k(U)$ has standard basis $e_{k+1}, \ldots, e_n$---it has the form
\begin{equation}\label{formU}
\begin{pmatrix} S_U & 0 \\ 0 & S_{\phi_k(U)} \end{pmatrix},
\end{equation}
where $S_U \in M_k(q_1)$ and $S_{\phi_k(U)} \in M_{n-k}(q_1)$ are Singer cycles. 
\end{Def}

Any matrix having the form of \eqref{formU} lies in the stabilizers of $U$ and $\phi_k(U)$ in $M_n(q_1)$. Moreover, we can count how many such matrices the stabilizer subrings contain. Let $\varphi$ be the Euler totient function. Since the number of choices for $S_U$ and $S_{\phi_k(U)}$ are, respectively,
\begin{equation*}
\dfrac{|\GL(k,q_1)|}{k(q_1^k-1)} \cdot \varphi(q_1^k-1) \; \text{ and } \; \dfrac{|\GL(n-k,q_1)|}{(n-k)(q_1^{n-k}-1)} \cdot \varphi(q_1^{n-k}-1),
\end{equation*}
the stabilizers of $U$ and $\phi_k(U)$ in $M_n(q_1)$ each contain 
\begin{equation}\label{formCount}
\dfrac{|\GL(k,q_1)|}{k(q_1^k-1)} \cdot \dfrac{|\GL(n-k,q_1)|}{(n-k)(q_1^{n-k}-1)} \cdot \varphi(q_1^k-1) \cdot \varphi(q_1^{n-k}-1)
\end{equation}
elements type $T_k$ stabilizing $U$.

Recall our assumption that $q=q_1^d$ and $1 \le d < n-(n/a)$, where $a$ is the smallest prime divisor of $n$. Because of this, one of $d$ or $n-d$ must be less than $n/2$. Assume without loss of generality that $1 \le d < n/2$. 

Given an element $t_U \in M_n(q_1)$ of type $T_d$  stabilizing some $d$-dimensional subspace $U$ of $\F_{q_1}^n$, let $u_0 \in U$ be nonzero, and define $u_i := t_U^i \cdot u_0$ for $1 \le i \le d-1$. Then, $\{u_0, \ldots, u_{d-1} \}$ forms a basis for $U$ because $t_U$ acts regularly on $U\setminus\{0\}$. Let $S_U \in M_d(q_1)$ be the Singer cycle on $U$ corresponding to $t_U$, and let $m(x) \in \F_{q_1}[x]$ be the minimal polynomial of $S_U$. Then, $m(x)$ has degree $d$ and is irreducible over $\F_{q_1}$. For each root $\alpha$ of $m(x)$, we have $\F_{q_1}[\alpha]=\F_q$; note also that $\alpha$ generates $\F_{q_2}$ over $\F_{q_1} \cap \F_{q_2}$. Letting $u := \sum_{i=0}^{d-1} u_i \cdot \alpha^i$, we have $t_U \cdot u = u \cdot \alpha$. Hence, $t_U + \alpha \in C_S(u)$. 

\begin{Def}\label{def:Pi0}
Let $\Pi_0$ be the set of all $t_U + \alpha \in S$, where $U$ runs through the $d$-dimensional subspaces of $\F_{q_1}^n$, $t_U$ has type $T_d$ stabilizing $U$, and $\alpha$ is a root of the minimal polynomial of $S_U$.
\end{Def}

Next, if $d > 1$, then $\omega(d) > 0$ and $\F_{q_2}$ has exactly $\omega(d)$ maximal subfields containing $\F_{q_1} \cap \F_{q_2}$. For each $1 \le i \le \omega(d)$, let $\F_{r_i}$ be the associated maximal subfield of $\F_{q_2}$, and let $d_i = [\F_{r_i}: (\F_{q_1} \cap \F_{q_2})]$. Note that $1 \le d_i < d$. 

\begin{Def}\label{def:Pii}
When $d > 1$, for each $1 \le i \le \omega(d)$ we define $\Pi_i$ to the the set of all $t_W + \gamma \in S$, where $W$ runs through the $d_i$-dimensional subspaces of $\F_{q_1}^n$, $t_W \in M_n(q_1)$ has type $T_{d_i}$ stabilizing $W$, and $\gamma$ is a root of the minimal polynomial of $S_W$.
\end{Def}

Note that $\gamma$ generates $\F_{r_i}$ over $\F_{q_1} \cap \F_{q_2}$. As with $t_U + \alpha$, each element $t_W+\gamma$ centralizes a nonzero $w \in J$. Explicitly, given a nonzero $w_0 \in W$, we can take $w:=\sum_{i=0}^{d_i-1} (t_W^i \cdot w_0) \cdot \gamma^i$, and then $t_W + \gamma \in C_S(w)$.

\begin{Def}\label{def:Pi}
We define
\begin{equation*}
\Pi := \bigcup_{i=0}^{\omega(d)} \Pi_i \subseteq \bigcup_{v \in J \setminus \{0\}} C_S(v).
\end{equation*}
\end{Def}

We next need a partition of the maximal subrings of $S$ so that we may apply Lemma \ref{lem:mincovercriterion}. As noted at the beginning of Section \ref{sect:setup}, the maximal subrings of $S$ are of the form $T_1 \oplus S_2$, where $T_1$ is maximal in $S_1$, or $S_1 \oplus T_2$, where $T_2$ is maximal in $S_2$. Furthermore, the maximal subrings of $S_1$ fall into the three classes listed in Definition \ref{def:maxsubs}.

\begin{Def}\label{def:mcM}
We partition the maximal subrings of $S$ into classes $\mcM_i$, where $i \in I_R$, as follows.
\begin{itemize}
 \item $\mcM_0$ is defined to be the set of all maximal subrings of $S$ of the form $T_U \oplus S_2$, where $T_U$ is a maximal subring of $S_1$ of Type I-$d$.
 \item Assuming $\omega(d) \ge 1$, for each $1 \le i \le \omega(d)$ let $\mcM_i = \{S_1 \oplus T_i\}$, where $T_i $ is the maximal subring of $S_2$ isomorphic to the maximal subfield $\F_{r_i}$ of $\F_{q_2}$ containing $\F_{q_1} \cap \F_{q_2}$.
 \item For each $k \neq d$, $1 \le k \le n - 1$, there is a  set $\mcM_{I,k}$ containing all maximal subrings of $S$ of the form $T \oplus S_2$, where $T$ is a maximal subring of $S_1$ of Type I-$k$.
 \item For each prime $\ell$ dividing $n$, there is a  set $\mcM_{II,\ell}$ containing all maximal subrings of $S$ of the form $T \oplus S_2$, where $T$ is a maximal subring of $S_1$ of Type II-$\ell$.
 \item For each maximal subfield $\F_r$ of $\F_{q_1}$, there is a  set $\mcM_{III,r}$ containing all maximal subrings of $S$ of the form $T \oplus S_2$, where $T$ is a maximal subring of $S_1$ of Type III-$r$.
 \item For each maximal subfield $\F_r$ of $\F_{q_2}$ that does not contain $\F_{q_1} \cap \F_{q_2}$, there is a distinct set $\mcM_{IV,r} := \{S_1 \oplus T\}$, where $T$ is the maximal subfield of $S_2$ isomorphic to $\F_r$.
\end{itemize}
Finally, define $I := \{0, \dots, \omega(d)\} \subseteq I_R.$
\end{Def}

\section{The Covering Number of $A(n,q_1,q_2)$}\label{sect:AGL}

We continue to use the notations defined in Section \ref{sect:setup}. In Theorem \ref{thm:mincoverPi} below, we will assemble all of the disparate pieces constructed in Section \ref{sect:setup} and apply Lemma \ref{lem:mincovercriterion} to conclude that $\msC$ is a minimal cover of $\Pi$. Before doing so, we need some preparatory lemmas. First, we consider which maximal subrings of $M_n(q_1)$ might contain elements of type $T_d$ (cf.\ \cite[Lemma 2.2]{Crestani}, \cite[Lemma 7.4]{LucchiniMarotiARXIV}).

\begin{lem}\label{lem:Tk}
Let $q$ be any prime power, let $k$ be an integer such that $1 \le k < n/2$, let $U$ be a $k$-dimensional subspace of $\F_q^n$, and let $t \in M_n(q)$ be an element of type $T_k$ stabilizing $U$.  If $T$ is a maximal subring of $M_n(q)$ containing $t$, then either
\begin{itemize}
\item $T=T_U$ or $T=T_{\phi_k(U)}$, where $T_U$ and $T_{\phi_k(U)}$ are the Type I-$k$ and Type I-$(n-k)$ subspaces leaving $U$ and $\phi_k(U)$ invariant, respectively; or
\item $T$ is a Type II-$\ell$ maximal subring, where $\ell$ is a common prime divisor of $k$ and $n$.
\end{itemize}
\end{lem}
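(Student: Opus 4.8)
The plan is to read off the $\F_q[t]$-module structure of $\F_q^n$ from the fact that $t$ is block-diagonal with Singer-cycle blocks, and then to run through the three types of maximal subring from the classification in Definition \ref{def:maxsubs} (\cite[Theorem 3.3]{PeruginelliWerner}). Writing $m_U$ and $m_\phi$ for the minimal polynomials of the Singer cycles $S_U$ and $S_{\phi_k(U)}$, these are irreducible over $\F_q$ of degrees $k$ and $n-k$; since $k < n/2$ forces $k \ne n-k$, they are distinct. Hence both the minimal and the characteristic polynomial of $t$ equal $m_U m_\phi$, so $t$ is semisimple and $\F_q^n$ is a cyclic $\F_q[t]$-module; concretely $\F_q^n \cong \F_q[x]/(m_U) \oplus \F_q[x]/(m_\phi)$ is a direct sum of two nonisomorphic simple modules, namely $U$ and $\phi_k(U)$. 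From this I would extract three facts: (a) the only proper nonzero $t$-invariant subspaces are $U$ and $\phi_k(U)$; (b) $C_{M_n(q)}(t) = \F_q[t] \cong \F_{q^k} \times \F_{q^{n-k}}$, each element acting on a summand as multiplication by a field element; and (c) since $S_U$ generates a cyclic group of order $q^k - 1$, any root $\alpha$ of $m_U$ is a primitive element of $\F_{q^k}$, and likewise the roots of $m_\phi$ are primitive in $\F_{q^{n-k}}$.

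The Type I and Type II cases then fall out quickly. If $T$ is the stabilizer $T_W$ of a proper nonzero subspace $W$, then $t \in T_W$ makes $W$ a $t$-invariant subspace, so by (a) $W$ is $U$ or $\phi_k(U)$ and $T = T_U$ or $T = T_{\phi_k(U)}$. If $T = C_{M_n(q)}(K)$ is Type II-$\ell$, with $K \cong \F_{q^\ell}$ and $\ell \mid n$, then $t \in T$ is equivalent to $K \subseteq C_{M_n(q)}(t) \cong \F_{q^k} \times \F_{q^{n-k}}$ by (b); as $K$ is a field containing the identity $(1,1)$, each coordinate projection restricts to a field embedding of $K$, giving $\F_{q^\ell} \hookrightarrow \F_{q^k}$ and thus $\ell \mid k$. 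Hence $\ell$ is a common prime divisor of $k$ and $n$, exactly as the lemma asserts.

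I expect the real work to be in excluding Type III, a $\GL(n,q)$-conjugate of $M_n(r)$ for a maximal subfield $\F_r \subsetneq \F_q$; here $[\F_q : \F_r] = \ell$ is prime, so $q = r^\ell$ with $\ell \ge 2$. For this I would use only the necessary condition that $t \in gM_n(r)g^{-1}$ makes $t$ conjugate to a matrix over $\F_r$, so its characteristic polynomial $m_U m_\phi$ lies in $\F_r[x]$ and its root set is stable under the Frobenius $x \mapsto x^r$. Applying this to a root $\alpha$ of $m_U$, the image $\alpha^r$ still lies in $\F_{q^k}$; but no root of $m_\phi$ lies in $\F_{q^k}$, since such a root generates $\F_{q^{n-k}}$ and $\F_{q^{n-k}} \not\subseteq \F_{q^k}$ (as $n - k > k$). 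Thus $\alpha^r$ must be a root of $m_U$, i.e. $\alpha^r = \alpha^{q^i}$ for some $0 \le i \le k-1$, which by (c) forces $(q^k - 1) \mid (q^i - r)$. I would close the case with the elementary check that $q^i \ne r$ and $0 < |q^i - r| < q^k - 1$ for every $0 \le i \le k-1$ (here $q = r^\ell$ with $\ell \ge 2$ is precisely what makes $q^i = r$ impossible), giving the desired contradiction. The conceptual heart of the whole argument, and the only genuinely delicate point, is fact (c): the primitivity of the Singer-cycle eigenvalue is exactly what prevents $m_U$, and hence $m_U m_\phi$, from being defined over any proper subfield of $\F_q$.
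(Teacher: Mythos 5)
Your proof is correct, and for the two harder cases it takes a genuinely different route from the paper's. The Type I analysis is essentially the same in both (the only proper nonzero $t$-invariant subspaces are $U$ and $\phi_k(U)$, read off from the minimal polynomial). For Type II, the paper works in the unit group: it notes $|C_{\GL(n,q)}(t)| = (q^k-1)(q^{n-k}-1)$, observes that membership in a Type II-$\ell$ subring forces the scalars of a copy of $\GL(n/\ell,q^\ell)$ into this centralizer, and derives a contradiction when $\ell \nmid k$ from $\gcd(q^\ell-1,q^k-1)=q-1=\gcd(q^\ell-1,q^{n-k}-1)$; your projection argument from $C_{M_n(q)}(t)\cong \F_{q^k}\times\F_{q^{n-k}}$ reaches $\ell \mid k$ more directly and without the gcd computation. (The paper also records the converse---that $\ell \mid \gcd(n,k)$ guarantees $t$ lies in \emph{some} Type II-$\ell$ subring---which the lemma does not assert and you rightly omit.) For Type III, the paper compares the order of $t$, which is at least $q^{n-k}-1 = r^{b(n-k)}-1$ with $b(n-k)>n$, against the maximum element order $r^n-1$ in $\GL(n,r)$, citing Britnell; your argument instead uses Frobenius-stability of the root set of the characteristic polynomial together with the primitivity of the Singer eigenvalue, and is self-contained. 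I checked the closing estimate: with $q=r^\ell$, $\ell\ge 2$, one indeed has $q^i\ne r$ and $0<|q^i-r|<q^k-1$ for all $0\le i\le k-1$, so $(q^k-1)\mid(q^i-r)$ is impossible. Your version buys a uniform, module-theoretic, citation-free proof at the cost of a slightly longer setup.
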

\begin{proof}
The stated classification of Type I or Type II maximal subrings containing $t$ is shown in \cite[Proposition 7.3]{LucchiniMarotiARXIV} and \cite[Lemma 7.4]{LucchiniMarotiARXIV}. We briefly summarize these arguments for the sake of completeness.

A Singer cycle on an $m$-dimensional vector space over $\F_q$ has minimal polynomial of degree $m$ that is irreducible over $\F_q$. Consequently, the minimal polynomial of $t$ is a product of two irreducible polynomials, one of degree $k$ and the other of degree $n-k$. It follows that if $t$ stabilizes a nonzero proper subspace $W \subseteq \F_q^n$, then either $\dim W = k$ or $\dim W = n-k$, and therefore either $W=U$ or $W=\phi_k(U)$. We conclude that $T_U$ and $T_{\phi_k(U)}$ are the only Type I maximal subrings of $M_n(q)$ containing $t$.

Now, assume that $\ell$ is a common prime divisor of both $n$ and $k$. The cyclic subgroup generated by a Singer cycle in $\GL(k,q)$ is isomorphic to a subgroup of $\GL(k/\ell, q^\ell)$, so $t$ is contained in some Type II-$\ell$ maximal subring of $M_n(q)$. On the other hand, suppose that $\ell$ is a prime divisor of $n$ but $\ell \nmid k$. The centralizer $C$ of $t$ in $\GL(n,q)$ has order $(q^k-1)(q^{n-k}-1)$. If $t$ is contained in a Type II-$\ell$ maximal subring, then $t$ is contained in a copy of $\GL(n/\ell, q^\ell)$, and $C$ contains the group of scalar matrices of this group. Hence, $q^\ell-1$ divides $(q^k-1)(q^{n-k}-1)$. However, since $\ell \nmid k$, $\gcd(q^\ell-1, q^k-1) = q-1 = \gcd(q^\ell-1, q^{n-k}-1)$. It follows that $q^\ell-1$ divides $(q-1)^2$, which is impossible since $\ell \ge 2$.

It remains to show that no Type III maximal subring of $M_n(q)$ contains $t$. Suppose that $\F_r$ is a maximal subfield of $\F_q$ and let $b$ be the prime such that $q=r^b$. Then, the order of $t$ is at least $r^{b(n-k)}-1$. Since $b \ge 2$ and $k < n/2$, we have $b(n-k) \geq 2((n/2) + 1) > n$. As noted in \cite[Lemma 4.5]{Britnell1}, the maximum order of an element of $\GL(n,r)$ is $r^n-1$, which is less than the order of $t$. Thus, $t$ cannot lie in any Type III-$r$ maximal subring of $M_n(q)$.
\end{proof}

Next, we derive some numerical bounds that will be needed when we examine $|M \cap \Pi_i|$ and $|M_i \cap \Pi_i|$.

\begin{lem}\label{lem:nastyestimates}
Let $n \ge 2$, let $1 \le d \le n$, and let $m \ge 1$ be a common divisor of both $n$ and $d$. Then,
\begin{equation*}
\binom{n/m}{d/m}_q \le q^{d/m} \cdot q^{d(n-d)/(m^2)},
\end{equation*}
and
\begin{equation*}
\dfrac{|\GL(d,q)|}{|\GL(d/m, q^m)|} \ge q^{d(d-1)(m-1)/m}.
\end{equation*}
\end{lem}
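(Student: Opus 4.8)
The plan is to establish the two inequalities independently, since they are purely numerical estimates about $q$-binomial coefficients and orders of general linear groups. For both, I would write everything in terms of the standard product formulas and then bound factor-by-factor.

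For the first inequality, I would start from the explicit formula
\begin{equation*}
\binom{n/m}{d/m}_q = \prod_{j=0}^{(d/m)-1} \frac{q^{(n/m)-j}-1}{q^{(d/m)-j}-1}.
\end{equation*}
The key observation is that each ratio $\frac{q^{N-j}-1}{q^{D-j}-1}$ (with $N = n/m$, $D = d/m$) is at most $q^{N-D}$ times a correction close to $1$; more precisely, I would use the standard bound $\binom{N}{D}_q \le q^{D(N-D)} \cdot \prod_{i=1}^{D}\frac{1}{1-q^{-i}}$, or simply bound each numerator factor by $q^{N-j}$ and each denominator factor below by $q^{D-j}-q^{D-j-1}$ to extract the leading power $q^{D(N-D)} = q^{(d/m)((n-d)/m)} = q^{d(n-d)/m^2}$. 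The leftover factor $\prod_{i}(1-q^{-i})^{-1}$ must then be absorbed into the extra $q^{d/m}$ on the right-hand side; since $\prod_{i\ge 1}(1-q^{-i})^{-1} < 2 \le q^{d/m}$ whenever $q \ge 2$ and $d/m \ge 1$, this closes the gap. The main thing to check carefully is that $q^{d/m}$ genuinely dominates the product of correction terms even in the worst case $q=2$, $d/m=1$, where the right side is $2 \cdot q^{d(n-d)/m^2}$; here $\binom{n/m}{1}_2 = 2^{n/m}-1 < 2 \cdot 2^{(n/m)-1}$, which works.

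For the second inequality, I would write
\begin{equation*}
\frac{|\GL(d,q)|}{|\GL(d/m,q^m)|} = \frac{\prod_{i=0}^{d-1}(q^d - q^i)}{\prod_{j=0}^{(d/m)-1}(q^d - q^{mj})},
\end{equation*}
using $|\GL(d/m,q^m)| = \prod_{j=0}^{(d/m)-1}((q^m)^{d/m}-(q^m)^j)$ and noting $(q^m)^{d/m}=q^d$. Every factor $q^d - q^{mj}$ in the denominator cancels against the corresponding factor $q^d - q^i$ with $i=mj$ in the numerator (up to the unit-power comparison $q^d-q^{mj} \ge q^d-q^{mj}$, an equality of the leading terms that I would handle by bounding $q^d-q^i \ge q^d-q^{d-1} \ge \tfrac12 q^d$ or, more cleanly, $q^d - q^i = q^i(q^{d-i}-1) \ge q^i$). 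After cancellation, the surviving numerator factors are those $q^d-q^i$ with $i \not\equiv 0 \pmod m$, of which there are exactly $d - d/m = d(m-1)/m$. Bounding each such factor below by $q^i \ge q^{i}$ and summing the exponents $i$ over the appropriate residue classes would yield the claimed power $q^{d(d-1)(m-1)/m}$; the cleanest route is to pair the numerator's $\prod(q^d-q^i)$ against the denominator's $\prod(q^d-q^{mj})$ directly and bound each surviving quotient below by the corresponding power of $q$. The main obstacle here is bookkeeping the exponents: I would verify that $\sum_{i \not\equiv 0} i$ over the relevant range gives exactly $\tfrac{d(d-1)(m-1)}{m}$, which I expect follows from the standard fact that the non-multiples of $m$ in $\{0,1,\dots,d-1\}$ have total weighted contribution matching this leading exponent once the $(q^{d-i}-1)$-type factors are bounded below by $1$. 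This exponent-counting step is the only genuinely delicate part; the rest is routine factor-wise comparison.
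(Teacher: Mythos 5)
Your treatment of the first inequality is essentially the paper's: writing $\binom{n/m}{d/m}_q = \prod_{k=0}^{(d/m)-1}\frac{q^{(n/m)-k}-1}{q^{(d/m)-k}-1}$ and bounding each factor by $q^{(n/m)-(d/m)+1}$ gives the claim at once, and your numerator/denominator version of this (numerator $\le q^{(n/m)-k}$, denominator $\ge q^{(d/m)-k-1}(q-1)$, then $(q-1)^{d/m}\ge 1$) does the same job. One side remark: your assertion that $\prod_{i \ge 1}(1-q^{-i})^{-1} < 2$ is false for $q=2$, where that infinite product is about $3.46$; fortunately you never actually need it, since the direct factor-wise route already lands exactly on $q^{d/m}\cdot q^{d(n-d)/m^2}$.

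The second inequality, however, has a genuine gap, and it sits precisely in the step you flagged as delicate. After the correct, exact cancellation
\begin{equation*}
\frac{|\GL(d,q)|}{|\GL(d/m,q^m)|} = \prod_{k=0, \ m\nmid k}^{d-1}(q^d - q^k),
\end{equation*}
you propose to bound each surviving factor below by $q^d - q^k = q^k(q^{d-k}-1)\ge q^k$ and to check that the exponents $k$ sum to $d(d-1)(m-1)/m$. They do not: the sum of the $k$ with $0 \le k \le d-1$ and $m \nmid k$ equals
\begin{equation*}
\frac{d(d-1)}{2} - m\cdot\frac{(d/m)\bigl((d/m)-1\bigr)}{2} = \frac{d^2(m-1)}{2m},
\end{equation*}
which is strictly smaller than $d(d-1)(m-1)/m$ for every $d \ge 3$ (it is roughly half of it). Concretely, for $d=4$, $m=2$ your bound gives $(q^4-q)(q^4-q^3) \ge q^{1+3}=q^4$, while the lemma asserts a lower bound of $q^6$; the estimate $q^d-q^k \ge q^k$ is far too lossy for small $k$. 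The repair is to bound every surviving factor uniformly by $q^d - q^k \ge q^d - q^{d-1} \ge q^{d-1}$ and then just count factors: there are $d - d/m$ of them, so the product is at least $(q^{d-1})^{d-(d/m)} = q^{d(d-1)(m-1)/m}$. This is exactly what the paper does.
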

\begin{proof}
For each $0 \le k \le (d/m)-1$, 
\begin{equation*}
\dfrac{q^{(n/m)-k}-1}{q^{(d/m)-k}-1} \le q^{(n/m)-(d/m)+1},
\end{equation*}
which gives
\begin{equation*}
\binom{n/m}{d/m}_q = \prod_{k=0}^{(d/m)-1} \dfrac{q^{(n/m)-k}-1}{q^{(d/m)-k}-1} \le (q^{(n/m)-(d/m)+1})^{d/m} = q^{d/m} \cdot q^{d(n-d)/(m^2)}.
\end{equation*}
Next, since $|\GL(n,q)| = \prod_{k=0}^{n-1} (q^n-q^k)$, we get
\begin{equation}\label{GLfrac}
\dfrac{|\GL(d,q)|}{|\GL(d/m, q^m)|} = \prod_{k=0, m \nmid k}^{d-1} (q^d-q^k).
\end{equation}
Now, $q^d-q^k \ge q^{d-1}$ for each $k$, and the product in \eqref{GLfrac} has $d-(d/m)$ factors, so
\begin{equation*}
\dfrac{|\GL(d,q)|}{|\GL(d/m, q^m)|} \ge (q^{d-1})^{d-(d/m)} = q^{d(d-1)(m-1)/m},
\end{equation*}
as claimed.
\end{proof}

\begin{lem}\label{lem:moreestimates}
Let $n \ge 5$, let $a$ be the smallest prime divisor of $n$, let $2 \le d < n-(n/a)$, and let $\ell$ be a common prime divisor of both $n$ and $d$. Then,
\begin{equation*}
\binom{n/\ell}{d/\ell}_q \cdot \dfrac{|\GL(d/\ell, q^\ell)|}{|\GL(d,q)|} \cdot \dfrac{|\GL(n/\ell - d/\ell, q^\ell)|}{|\GL(n-d, q)|} \le q^{-(\ell-1)^2-3}.
\end{equation*}
\end{lem}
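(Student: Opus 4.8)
The plan is to bound each of the three factors on the left-hand side separately using Lemma~\ref{lem:nastyestimates}, multiply the resulting powers of $q$, and then verify that the exponent is at most $-(\ell-1)^2-3$; since $q \ge 2$, this exponent comparison yields the desired inequality. Because $\ell$ is a common prime divisor of $n$ and $d$, it also divides $n-d$, so $n/\ell - d/\ell = (n-d)/\ell$ and every factor has the correct integral form.

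Concretely, I would apply the first inequality of Lemma~\ref{lem:nastyestimates} with $m=\ell$ to get $\binom{n/\ell}{d/\ell}_q \le q^{d/\ell}\,q^{d(n-d)/\ell^2}$, and the second inequality twice --- once with its ``$d$'' equal to $d$ and once with its ``$d$'' equal to $n-d$ (legitimate since $\ell \mid d$ and $\ell \mid (n-d)$) --- to obtain
\[
\frac{|\GL(d/\ell,q^\ell)|}{|\GL(d,q)|} \le q^{-d(d-1)(\ell-1)/\ell} \quad\text{and}\quad \frac{|\GL((n-d)/\ell,q^\ell)|}{|\GL(n-d,q)|} \le q^{-(n-d)(n-d-1)(\ell-1)/\ell}.
\]
Multiplying, the product is at most $q^{E}$, where $E = \tfrac{d}{\ell} + \tfrac{d(n-d)}{\ell^2} - \tfrac{\ell-1}{\ell}\big[d(d-1)+(n-d)(n-d-1)\big]$, so it suffices to prove $E \le -(\ell-1)^2-3$.

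The next step is to pass to the cleaner variables $e := d/\ell$ and $f := (n-d)/\ell$, which are positive integers. A direct simplification gives $E = \ell e + (\ell-1)f + ef - \ell(\ell-1)(e^2+f^2)$, so the target becomes the purely arithmetic inequality $g(e,f) \ge (\ell-1)^2+3$, where $g(e,f) := \ell(\ell-1)(e^2+f^2) - ef - \ell e - (\ell-1)f$. Here $e \ge 1$ because $\ell \mid d$ forces $d \ge \ell$, and $f \ge 2$: indeed $\ell \mid (n-d)$ gives $f \ge 1$, and the standing reduction $d < n/2$ from Section~\ref{sect:setup} gives $e < f$, hence $f \ge 2$. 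This last point is exactly what excludes the degenerate case $e=f=1$ (equivalently $n=2\ell$, $d=\ell$), ruled out by the hypothesis $d < n-(n/a)$, and in which the inequality would fail.

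The remaining and most delicate step is the arithmetic inequality $g(e,f) \ge (\ell-1)^2 + 3$, whose difficulty is that it is \emph{tight}: equality holds at $(\ell,e,f)=(2,1,2)$, i.e.\ $n=6$, $d=2$. To prove it I would substitute $e = 1+s$ and $f = 2+t$ with $s,t \ge 0$ and expand; the key observation is that every resulting contribution is manifestly nonnegative for $\ell \ge 2$:
\[
g(e,f) - \big[(\ell-1)^2+3\big] = 2(2\ell+1)(\ell-2) + (2\ell+1)(\ell-2)\,s + \ell(4\ell-5)\,t + \big[\,\ell(\ell-1)(s^2+t^2) - st\,\big].
\]
The first three terms are nonnegative since $\ell \ge 2$ (so $\ell - 2 \ge 0$ and $4\ell - 5 > 0$), and the bracketed quadratic is nonnegative because $\ell(\ell-1) \ge 2$ and $s^2 + t^2 \ge st$. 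This gives $E \le -(\ell-1)^2-3$ and completes the argument. The main obstacle is not any single hard estimate but an organizational one: keeping the three invocations of Lemma~\ref{lem:nastyestimates} aligned with the correct signs and exponents, and recognizing that the tightness at $\ell = 2$ leaves no slack to waste, which is precisely what forces reliance on $f \ge 2$ rather than merely $e,f \ge 1$.
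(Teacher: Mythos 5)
Your proposal is correct. The first half is identical to the paper's: both apply Lemma \ref{lem:nastyestimates} three times (once for the $q$-binomial coefficient, twice for the $\GL$ ratios, using that $\ell$ divides $d$ and $n-d$), reduce to showing the exponent $E \le -(\ell-1)^2-3$, and invoke the symmetry/standing reduction to assume $d < n/2$. Where you diverge is the endgame. The paper splits $E$ into two pieces and bounds them separately: it shows $\tfrac{d}{\ell} - \tfrac{d(d-1)(\ell-1)}{\ell} \le -(\ell-1)^2+1$ using $d-1 \ge \ell-1$, and $\tfrac{d(n-d)}{\ell^2} - \tfrac{(n-d)(n-d-1)(\ell-1)}{\ell} \le -\bigl(\tfrac{n-d}{\ell}\bigr)^2 \le -4$ using $n-d-1 \ge n/2$ and the fact that $(n-d)/\ell$ is an integer at least $2$. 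You instead pass to $e = d/\ell$, $f = (n-d)/\ell$, and prove the single polynomial inequality $g(e,f) \ge (\ell-1)^2+3$ via the exact identity obtained from the shift $e = 1+s$, $f = 2+t$, whose terms are individually nonnegative for $\ell \ge 2$; I have checked the identity and the nonnegativity of each coefficient, and they are correct. Both arguments lean on exactly the same two facts ($e \ge 1$ and $f \ge 2$, the latter from $e < f$), and both are tight at $(\ell,n,d)=(2,6,2)$; your version has the advantage of making the tightness and the location of equality completely explicit, at the cost of a slightly heavier algebraic expansion, while the paper's two-piece estimate is shorter to write but more ad hoc. The only presentational caveat is that the lemma as stated does not assume $d < n/2$, so if you rely on that reduction inside the proof you should note (as the paper does) that the left-hand side is symmetric under $d \leftrightarrow n-d$, which justifies the assumption without appealing to the surrounding section.
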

\begin{proof}
Let 
\begin{equation*}
B := \binom{n/\ell}{d/\ell}_q \cdot \dfrac{|\GL(d/\ell, q^\ell)|}{|\GL(d,q)|} \cdot \dfrac{|\GL(n/\ell - d/\ell, q^\ell)|}{|\GL(n-d, q)|}.
\end{equation*}
Note that $d \ne n/2$, and that the expression for $B$ is symmetric with respect to $d$ and $n-d$. So, we may assume without loss of generality that $d < n/2$. Using Lemma \ref{lem:nastyestimates},
\begin{equation*}
B \le q^{d/\ell} \cdot q^{d(n-d)/(\ell^2)} \cdot q^{-d(d-1)(\ell-1)/\ell} \cdot q^{-(n-d)(n-d-1)(\ell-1)/\ell}.
\end{equation*}
Using the facts that $\frac{d}{\ell} \ge 1$ and $d-1 \ge \ell-1$, we have
\begin{equation}\label{dL1}
\dfrac{d}{\ell} - \dfrac{d(d-1)(\ell-1)}{\ell} = - \dfrac{d}{\ell}((d-1)(\ell-1)-1) \le -(\ell-1)^2+1.
\end{equation}
Next, 
\begin{equation}\label{dL2}
\dfrac{d(n-d)}{\ell^2} - \dfrac{(n-d)(n-d-1)(\ell-1)}{\ell} = \dfrac{n-d}{\ell^2} \Big( d-(n-d-1)(\ell-1)\ell\Big).
\end{equation}
Since $d < n/2$, $n-d-1 \ge n/2$. (Since $\ell > 1$ by assumption, we may exclude $n = 2k + 1$ and $d = k$.)  Hence, 
\begin{equation}\label{dL3}
d-(n-d-1)(\ell-1)\ell \le d-(n/2)(1)(2) = d-n.
\end{equation}
Combining \eqref{dL2} and \eqref{dL3} yields
\begin{equation*}
\dfrac{d(n-d)}{\ell^2} - \dfrac{(n-d)(n-d-1)(\ell-1)}{\ell} \le \dfrac{(n-d)(d-n)}{\ell^2} = -\Big(\dfrac{n-d}{\ell}\Big)^2.
\end{equation*}
Now, $\ell \le d < n/2$, so $n-d > \ell$. Thus, $(n-d)/\ell$ is an integer greater than 1. Consequently, $-((n-d)/\ell)^2 \le -4$. Using this and \eqref{dL1}, we obtain
\begin{equation*}
B \le q^{-(\ell-1)^2+1} \cdot q^{-4} = q^{-(\ell-1)^2-3},
\end{equation*}
which is the desired result.
\end{proof}

\begin{thm}\label{thm:mincoverPi}
 The collection $\msC$ is a minimal cover of $\Pi$.
\end{thm}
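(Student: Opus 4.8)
The plan is to apply Lemma \ref{lem:mincovercriterion} with $\mcD = \msC$. First I would identify $\msC$ with $\bigcup_{i\in I}\mcM_i$, where $I = \{0,\dots,\omega(d)\}$: the subrings $T_U\oplus\F_{q_2}$ of Definition \ref{def:C} are exactly the class $\mcM_0$ (Type I-$d$), and the subrings $M_n(q_1)\oplus\F_{r_i}$ are the singleton classes $\mcM_i$ for $1\le i\le\omega(d)$, so $\mcD=\msC$. Hypothesis (i) is then the partition of Definition \ref{def:mcM}, and hypothesis (ii)—that $\mcD$ covers $\Pi$—is immediate from Proposition \ref{prop:AGLn>1cover} together with the containment $\Pi\subseteq\bigcup_{v}C_S(v)$ recorded in Definition \ref{def:Pi}.

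For hypothesis (iii) I would check that $\Pi_i$ is precisely the set of elements of $\Pi$ lying in the subrings of $\mcM_i$ and that the $\Pi_i$ partition $\Pi$. The field component of an element of $\Pi_0$ generates $\F_{q_2}$ over $\F_{q_1}\cap\F_{q_2}$ (degree $d$), whereas that of an element of $\Pi_i$ with $i\ge 1$ generates the proper subfield $\F_{r_i}$ (degree $d_i<d$); thus the degree and the subfield generated separate the classes, so an element of $\Pi_i$ cannot lie in $\mcM_j$ for $j\ne i$. Moreover Lemma \ref{lem:Tk} shows that each $t_U+\alpha\in\Pi_0$ lies in the unique subring $T_U\oplus\F_{q_2}$ of $\mcM_0$, so the elements are genuinely partitioned among the subrings of $\mcD$. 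Hypothesis (iv) follows from the conjugation action of $S_1^\times\cong\GL(n,q_1)$, which is transitive on each class $\mcM_j$ (it acts transitively on the subspaces of a fixed dimension, and on each single conjugacy class of Type II or Type III subrings) and which leaves every $\Pi_i$ invariant, since it fixes the scalar field component and preserves both the property of being of type $T_{d_i}$ and the relevant minimal polynomials.

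It then remains to bound $c(M)=\sum_{i\in I}|M\cap\Pi_i|/|M_i\cap\Pi_i|\le 1$ for every maximal subring $M$ outside $\mcD$. I would first record the reference counts: $|M_0\cap\Pi_0|$ is the number \eqref{formCount} of type $T_d$ elements stabilizing a fixed $U$ times the number of admissible field components, while $|M_i\cap\Pi_i|=|\Pi_i|$ because $M_i=S_1\oplus\F_{r_i}$ contains all of $\Pi_i$. The easy classes are then handled with Lemma \ref{lem:Tk}: a Type III subring contains no element of type $T_k$ with $k<n/2$, so it meets no $\Pi_i$ and $c(M)=0$; the Type IV subrings require only a short field-theoretic check (the field component of a $\Pi_i$-element generates $\F_{q_2}$ or $\F_{r_i}$ over $\F_{q_1}\cap\F_{q_2}$, while a Type IV subfield does not contain $\F_{q_1}\cap\F_{q_2}$), after which they contribute nothing. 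A Type I-$k$ subring with $k\ne d$ meets $\Pi_0$ only when $k=n-d$, and in that case $\phi_d$ pairs the $(n-d)$-stabilizer with a single $d$-stabilizer containing exactly the same type $T_d$ elements, giving the boundary value $c(M)=1$; for the remaining $k$ the only contribution comes from a single $\Pi_i$ with $d_i\in\{k,n-k\}$ and equals $1/\binom{n}{d_i}_{q_1}<1$. This tight case $c(M)=1$ for $k=n-d$ is exactly why $\msC$ is minimal but in general not the unique minimal cover.

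The main obstacle is the Type II-$\ell$ class, the only class outside $\mcD$ that can contribute to several $\Pi_i$ simultaneously. Here $M=T\oplus S_2$ with $T\cong M_{n/\ell}(q_1^\ell)$, and by Lemma \ref{lem:Tk} the term $|M\cap\Pi_i|$ is nonzero exactly when $\ell\mid d_i$ (writing $d_0:=d$). For such $i$, an element of $\Pi_i$ lying in $M$ forces its stabilized subspace to be defined over $\F_{q_1^\ell}$, so counting type $T_{d_i}$ elements inside $M_{n/\ell}(q_1^\ell)$ against those in $M_n(q_1)$ expresses $|M\cap\Pi_i|/|M_i\cap\Pi_i|$ through $\binom{n/\ell}{d_i/\ell}_{q_1^\ell}$ and ratios of general linear group orders—precisely the quantity estimated in Lemmas \ref{lem:nastyestimates} and \ref{lem:moreestimates}. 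The bound $B\le q^{-(\ell-1)^2-3}$ makes each such term exponentially small, and summing over the at most $1+\omega(d)$ relevant indices keeps $c(M)$ well below $1$. Since Lemma \ref{lem:moreestimates} requires $n\ge 5$ and $d\ge 2$, I would dispose of $n\in\{3,4\}$ separately, but there the standing hypothesis $d<n-(n/a)$ forces $d=1$, whence $\omega(d)=0$, $\Pi=\Pi_0$, and no Type II-$\ell$ subring meets $\Pi_0$ (as $\ell\nmid 1$), so the difficulty disappears. Assembling the cases gives $c(M)\le 1$ for all $M\notin\mcD$, and Lemma \ref{lem:mincovercriterion} yields that $\msC$ is a minimal cover of $\Pi$.
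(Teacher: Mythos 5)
Your proposal is correct and follows essentially the same route as the paper's proof: it applies Lemma \ref{lem:mincovercriterion} with $\mcD=\msC$, verifies hypotheses (i)--(iv) via the partition of Definition \ref{def:mcM} and conjugacy of maximal subrings, computes the reference counts from \eqref{formCount}, and bounds $c(M)$ case by case using Lemma \ref{lem:Tk} for Types I, III, IV and Lemma \ref{lem:moreestimates} for Type II-$\ell$. The only (harmless) deviations are that you give the sharper value $1/\binom{n}{d_i}_{q_1}$ where the paper settles for $\le 1$, and your ``nonzero exactly when $\ell\mid d_i$'' for Type II should strictly be ``only if,'' which is the direction the argument actually uses.
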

\begin{proof}
We first verify that conditions (i)--(iv) of Lemma \ref{lem:mincovercriterion} are satisfied. It is clear that the collections $\mcM_j$ partition the set of all maximal subrings of $S$, and it was shown in Proposition \ref{prop:AGLn>1cover} that $\msC = \bigcup_{i \in I} \mcM_i$ covers $\Pi$. Moreover, $\{ \Pi_i: 0 \le i \le \omega(d)\}$ is a partition of $\Pi$. Indeed, each element of $\Pi$ has the form $t+\beta$, where $t \in S_1$ and $\beta \in S_2$. The element $\beta$ generates $S_2$ if and only if $t+\beta \in \Pi_0$, and $\beta$ generates a maximal subring $\F_{r_i}$ of $S_2$ containing $\F_{q_1} \cap \F_{q_2}$ if and only if $t+\beta \in \Pi_i$. Also, each subring in $\msC$ contains elements of $\Pi$.

Next, consider condition (iv) of Lemma \ref{lem:mincovercriterion}. By construction, each $\Pi_i$ is invariant under conjugation by elements of $S^\times$. As described in \cite[Proposition 3.4]{PeruginelliWerner}, for a fixed $k$ all Type I-$k$ maximal subrings of $M_n(q_1)$ are $\GL(n,q_1)$-conjugates. Analogous conjugacy relations hold for Type II-$\ell$ maximal subrings \cite[Lemma 3.6]{PeruginelliWerner} and Type III-$r$ maximal subrings \cite[Proposition 3.8]{PeruginelliWerner}. Hence, condition (iv) holds whenever $M$ and $M'$ are contained in a class $\mcM_0$, $\mcM_{I,k}$, $\mcM_{II,\ell}$, or $\mcM_{III,r}$. The remaining cases are trivial, because each of the classes $\mcM_{i}$ ($1 \le i \le \omega(d)$) and $\mcM_{IV,r}$ consist of a single maximal subring of $S$.

From here, we must calculate $c(M)$ for $M \notin \msC$. To do this, we must find the cardinalities $|M \cap \Pi_i|$ and $|M_i \cap \Pi_i|$. We begin with $|M_0 \cap \Pi_0|$. Let $M_0 \in \mcM_0$, so $M_0 = T_{U} \oplus S_2$, where $T_{U}$ is a maximal subring of $S_1$ of Type I-$d$ whose elements collectively form the stabilizer in $S_1$ of the $d$-dimensional subspace $U$ of $\F_{q_1}^n$. By Lemma \ref{lem:Tk}, the only elements of $\Pi$ contained in $M_0$ are those in $\Pi_0$, i.e., those of the form $t_{U} + \alpha$, where $t_{U} \in S_1$ is of type $T_d$ stabilizing $U$ and $\alpha \in S_2$ is a root of a particular irreducible polynomial in $\F_{q_1}[x]$ of degree $d$. The number of possibilities for $t_U$ is given in \eqref{formCount}, and there are $d$ possibilities for $\alpha$, so
\begin{equation}\label{eq:M0Pi0}
|M_0 \cap \Pi_0| = \dfrac{|\GL(d,q_1)|}{d(q_1^d-1)} \cdot \dfrac{|\GL(n-d,q_1)|}{(n-d)(q_1^{n-d}-1)} \cdot \varphi(q_1^d-1) \cdot \varphi(q_1^{n-d}-1)\cdot d.
\end{equation}

Next, assuming $\omega(d) > 0$, choose a fixed $i$ such that $1 \le i \le \omega(d)$, which corresponds to a maximal subfield $\F_{r_i}$ of $\F_{q_2}$ containing $\F_{q_1} \cap \F_{q_2}$.  Let $M_i = S_1 \oplus T_{r_i}$, where $T_{r_i}$ is the unique maximal subring of $S_2$ isomorphic to $\F_{r_i}$; note that $\mcM_i = \{M_i\}$. Given $t + \beta \in \Pi$, we have $t + \beta \in M_i$ if and only if the subring of $S_2$ generated by $\beta$ is $T_{r_i}$.  Thus, $M_i \cap \Pi_i = \Pi_i$.  

We count as we did in \eqref{eq:M0Pi0}, except that $M_i$ contains the stabilizer of each $d_i$-dimensional subspace of $\F_{q_1}^n$, of which there are $\binom{n}{d_i}_{q_1}$. This gives us
\begin{align}\label{eq:MiPii}
\begin{split}
|M_i \cap \Pi_i| &= {n \choose {d_i}}_{q_1} \cdot \frac{|\GL(d_i,q_1)|}{d_i(q_1^{d_i}-1)} \cdot \frac{|\GL(n-d_i, q_1)|}{(n-d_i)(q_1^{n-d_i}-1)} \\
& \quad\quad \cdot \varphi(q_1^{d_i} - 1) \cdot \varphi(q_1^{n-d_i} - 1) \cdot d_i.
\end{split}
\end{align}

Now, we will compute $|M \cap \Pi_i|$ for each $0 \le i \le \omega(d)$ and each $M \notin \msC$. If $M \in \mcM_{III,r}$ or $M \in \mcM_{IV,r}$, then, $|M \cap \Pi_i|=0$ for all $0 \le i \le \omega(d)$. Indeed, by Lemma \ref{lem:Tk}, no Type III maximal subring of $S_1$ contains an element of type $T_k$, so $M \cap \Pi_i = \varnothing$ when $M \in \mcM_{III,r}$. On the other hand, if $M \in \mcM_{IV,r}$ for some $r$, then $M = S_1 \oplus \F_r$ for a unique maximal subfield $\F_r$ of $\F_{q_2}$, and $(\F_{q_1} \cap \F_{q_2}) \not\subseteq \F_r$. However, for any $t+\beta \in \Pi$, $\beta$ generates a subfield of $\F_{q_2}$ containing $\F_{q_1} \cap \F_{q_2}$.

Next, assume $M \in \mcM_{I,k}$, where $k \ne d$. By Lemma \ref{lem:Tk}, $M \cap \Pi_{i'} \neq \varnothing$ for a unique value $i' \in I$. Suppose first that $i'=0$. Then, $M$ stabilizes an $(n-d)$-dimensional subspace $W$ of $\F_{q_1}^n$. Since these are in one-to-one correspondence with $d$-dimensional subspaces via the bijection $\phi_d$, $M$ contains all elements of type $T_d$ stabilizing $\phi_d^{-1}(W)$. Let $M_0 \in \mcM_0$ be the maximal subring in $\mcM_0$ that stabilizes $\phi_d^{-1}(W)$. Then, $M \cap \Pi_0 = M_0 \cap \Pi_0$, so 
\begin{equation*}
c(M) =  \frac{|M \cap \Pi_0|}{|M_0 \cap \Pi_0|} = 1.
\end{equation*}
On the other hand, if $i' > 0$ then $k=d_i$ or $k=n-d_i$. In either case, $\mcM_{i'}$ consists of a single subring $M_{i'}$, and $M_{i'} \cap \Pi_{i'} = \Pi_{i'}$. So,
\begin{equation*}
c(M) = \frac{|M \cap \Pi_{i'}|}{|M_{i'} \cap \Pi_{i'}|} = \frac{|M \cap \Pi_{i'}|}{|\Pi_{i'}|} \le 1.
\end{equation*}

The only remaining possibility is that $M \in \mcM_{II, \ell}$ for some prime divisor $\ell$ of $n$. Assume that $M = T \oplus S_2 \cong M_{n/\ell}(q_1^\ell) \oplus \F_{q_2}$.  Since each $d_i$ divides $d$, by Lemma \ref{lem:Tk}, $M \cap \Pi = \varnothing$ if $\ell$ does not divide $\gcd(n, d)$.  For the case when $\ell$ divides both $n$ and $d$, it is possible that $M$ contains elements from some or all sets $\Pi_i$. Note that for this to occur, $n \ge 5$, and $d$ (or $d_i$, if $\ell \mid d_i$) is at least 2. Counting as in \eqref{formCount} and \eqref{eq:M0Pi0}, we obtain
\begin{equation*}
|M \cap \Pi_0| \le \dfrac{|\GL(d/\ell,q_1^\ell)|}{\tfrac{d}{\ell}(q_1^d-1)} \cdot \dfrac{|\GL((n-d)/\ell,q_1^\ell)|}{\tfrac{n-d}{\ell}(q_1^{n-d}-1)} \cdot \varphi(q_1^d-1) \cdot \varphi(q_1^{n-d}-1)\cdot d.
\end{equation*}
Using this, \eqref{eq:M0Pi0}, and Lemma \ref{lem:moreestimates} yields
\begin{align}\label{eq:MPi0}
\begin{split}
\frac{|M \cap \Pi_0|}{|M_0 \cap \Pi_0|} &\le \ell^2 \cdot {n/\ell \choose d/\ell}_{q_1} \cdot \frac{|\GL(d/\ell, q_1^\ell)|}{|\GL(d, q_1)|} \cdot \frac{|\GL(n/\ell - d/\ell, q_1^\ell)|}{|\GL(n-d, q_1)|}\\
&\le \dfrac{\ell^2}{q_1^{(\ell-1)^2+3}}.
\end{split}
\end{align}
Similarly, using \eqref{eq:MiPii} in place of \eqref{eq:M0Pi0}, for $1 \le i \le \omega(d)$, we get
\begin{equation}\label{eq:MPii}
\frac{|M \cap \Pi_i|}{|M_i \cap \Pi_i|} \le \binom{n}{d_i}_{q_1}^{-1} \cdot \dfrac{\ell^2}{q_1^{(\ell-1)^2+3}} \le \binom{n}{1}_{q_1}^{-1} \cdot \dfrac{\ell^2}{q_1^{(\ell-1)^2+3}} \le \dfrac{\ell^2}{q_1^{n+(\ell-1)^2+3}}.
\end{equation}
Certainly, $\omega(d) \le q_1^n$, so 
\begin{equation*}
c(M) = \sum_{i = 0}^{\omega(d)} \dfrac{|M \cap \Pi_i|}{|M_i \cap \Pi_i|} \le \dfrac{\ell^2}{q_1^{(\ell-1)^2+3}} + \dfrac{\omega(d)\ell^2}{q_1^{n+(\ell-1)^2+3}} \le \dfrac{2\ell^2}{q_1^{(\ell-1)^2+3}},
\end{equation*}
and this last fraction is less than 1 because both $\ell$ and $q_1$ are at least 2.

We have shown that $c(M) \le 1$ for any $M \notin \msC$. By Lemma \ref{lem:mincovercriterion}, $\msC$ is a minimal cover of the elements of $\Pi$.
\end{proof}

\begin{cor}\label{cor:AGLn>1}
Let $n \ge 3$, let $a$ be the smallest prime divisor of $n$, and let $q = q_1 \otimes q_2 = q_1^d$ with $d < n-(n/a)$. There exists a cover of $R$ of size 
 \[ q^n + {n \choose d}_{q_1} + \omega(d),\]
 and, if $R$ is $\sigma$-elementary, then 
 \[ \sigma(R) = q^n + {n \choose d}_{q_1} + \omega(d).\]
\end{cor}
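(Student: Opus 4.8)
The plan is to assemble the ingredients from Proposition \ref{prop:AGLcover} and Theorem \ref{thm:mincoverPi} to pin down the covering number. The key structural fact is Proposition \ref{prop:AGLcover}: if $R$ is $\sigma$-elementary, then a minimal cover of $R$ is exactly $\msS(R) \cup \msZ$, where $\msZ = \{M \oplus J : M \in \mcM\}$ and $\mcM$ is a \emph{minimal} set of maximal subrings of $S$ covering $\bigcup_{x \in J \setminus \{0\}} C_S(x)$. Since $|\msS(R)| = |J| = q^n$ by Lemma \ref{lem:Jcenter}, and $\msS(R)$ is forced into every minimal cover (Lemma \ref{lem:SigmaElementary}(2)), the whole computation reduces to determining the size of a minimal cover $\mcM$ of the centralizer union. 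Thus $\sigma(R) = q^n + |\mcM|$, and the task is to show $|\mcM| = \binom{n}{d}_{q_1} + \omega(d)$.

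The upper bound is immediate. Proposition \ref{prop:AGLn>1cover} exhibits the explicit collection $\msC$ as a cover of $\bigcup_{x \in J \setminus \{0\}} C_S(x)$ of size exactly $\binom{n}{d}_{q_1} + \omega(d)$. Lifting via Proposition \ref{prop:AGLcover}(1) shows $\msS(R) \cup \msZ$ is a cover of $R$ of size $q^n + \binom{n}{d}_{q_1} + \omega(d)$, which establishes the existence claim and gives $\sigma(R) \le q^n + \binom{n}{d}_{q_1} + \omega(d)$ unconditionally (i.e.\ even without assuming $R$ is $\sigma$-elementary).

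For the reverse inequality under the $\sigma$-elementary hypothesis, I would invoke the fact that $\msC$ is in fact a \emph{minimal} cover of the \emph{proper subset} $\Pi \subseteq \bigcup_{x \in J \setminus \{0\}} C_S(x)$, which is precisely the content of Theorem \ref{thm:mincoverPi}. The logical point is that any set of maximal subrings of $S$ covering the larger set $\bigcup_{x} C_S(x)$ must in particular cover $\Pi$, and by minimality of $\msC$ for $\Pi$, such a covering set has size at least $\binom{n}{d}_{q_1} + \omega(d)$. Hence the minimal $\mcM$ from Proposition \ref{prop:AGLcover} satisfies $|\mcM| \ge \binom{n}{d}_{q_1} + \omega(d)$. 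Combining with the upper bound forces equality, and therefore $\sigma(R) = |\msS(R)| + |\mcM| = q^n + \binom{n}{d}_{q_1} + \omega(d)$.

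The main subtlety — though it is really a matter of correctly chaining the quoted results rather than new work — is ensuring the lower bound is applied to covers by maximal subrings only. By Proposition \ref{prop:AGLcover}(2) and Lemma \ref{lem:MaxSubringClassification}, every maximal subring of $R$ not among the forced complements in $\msS(R)$ has the form $T \oplus J$ with $T$ maximal in $S$, so extending $\msS(R)$ to a minimal cover of $R$ is genuinely equivalent to finding a minimal cover of the centralizer union by maximal subrings of $S$; Lemma \ref{lem:basics}(4) licenses restricting attention to maximal subrings in the first place. With that reduction in hand, the inequality $|\mcM| \ge |\msC|$ follows directly from Theorem \ref{thm:mincoverPi}, since $\Pi \subseteq \bigcup_{x} C_S(x)$ means any cover of the larger set covers $\Pi$ and is therefore no smaller than the minimal cover $\msC$ of $\Pi$.
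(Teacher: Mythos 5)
Your proposal is correct and follows essentially the same route as the paper: the upper bound via Proposition \ref{prop:AGLcover}(1) and the explicit cover $\msC$ from Proposition \ref{prop:AGLn>1cover}, and the lower bound via Proposition \ref{prop:AGLcover}(2) together with the minimality of $\msC$ on the subset $\Pi$ from Theorem \ref{thm:mincoverPi}. Your write-up is in fact slightly more careful than the paper's, since it makes explicit the step that a cover of $\bigcup_{x} C_S(x)$ must cover $\Pi$ and the restriction to maximal subrings, both of which the paper leaves implicit.
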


\begin{proof}
Applying Propositions \ref{prop:AGLcover}(1) and \ref{prop:AGLn>1cover} and recalling that $|\msS(R)|=q^n$ gives
\begin{equation*}
\sigma(R) \le |\msS(R)| + |\msZ| = q^n + {n \choose d}_{q_1} + \omega(d).
\end{equation*}
If $R$ is $\sigma$-elementary, then by Proposition \ref{prop:AGLcover}(2) and Corollary \ref{cor:AGLn>1}, $\msS(R) \cup \msZ$ is a minimal cover of $R$, so $\sigma(R)=|\msS(R)| + |\msZ|$.
\end{proof}

It remains to determine when $R$ is $\sigma$-elementary. This can be done by comparing the upper bound on $\sigma(R)$ in Corollary \ref{cor:AGLn>1} with $\sigma(M_n(q_1))$.

\begin{lem}\label{lem:prodlowerbound}
Let $n \ge 2$, let $a$ be the smallest prime divisor of $n$, and let $q$ be any prime power. Then, 
\begin{equation*}
\dfrac{1}{a}\prod_{k=1, a \nmid k}^{n-1} (q^n - q^k) \ge q^{n(n- (n/a)-1)},
\end{equation*}
with equality only when $(n,q)=(2,2)$ or $(3,2)$.
\end{lem}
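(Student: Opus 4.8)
The plan is to convert the product into a normalized form and then exploit monotonicity in $q$. First I would factor each term as $q^n - q^k = q^n(1 - q^{k-n})$ and record that the product runs over exactly the $m := n - (n/a)$ integers $k \in \{1,\dots,n-1\}$ with $a \nmid k$ (there are $n/a - 1$ multiples of $a$ in that range, since $a \mid n$). Substituting $j = n-k$ and using that $a \mid n$ forces $a \nmid k \iff a \nmid j$, the inequality becomes, after dividing by $q^{n(m-1)}$ and pulling the factor $q^{nm}$ out of the product, equivalent to
\[
G(n,q) := \frac{q^n}{a}\prod_{\substack{j=1\\ a\nmid j}}^{n-1}\bigl(1 - q^{-j}\bigr) \ge 1,
\]
with equality exactly at $(n,q)\in\{(2,2),(3,2)\}$.

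Next I would view $q \ge 2$ as a real variable with $n$ (hence $a$) fixed and observe that $G(n,q)$ is \emph{strictly increasing}: the prefactor $q^n$ is positive and strictly increasing, and each of the (finitely many, nonempty since $j=1$ always occurs) factors $1 - q^{-j}$ is positive and strictly increasing. Consequently it suffices to prove $G(n,2)\ge 1$; for every prime power $q>2$ strictness is then automatic, as $G(n,q) > G(n,2)\ge 1$. This already confines all equality cases to $q=2$.

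It then remains to treat $q=2$, i.e.\ to show $\prod_{j=1,\,a\nmid j}^{n-1}(1-2^{-j}) \ge a\,2^{-n}$ with equality iff $n\in\{2,3\}$. For $n=2$ and $n=3$ I would simply compute both sides (here $a=n$ and the range $\{1,\dots,n-1\}$ contains no multiple of $a$, so no factors are dropped) and check equality directly. For $n\ge 4$ I would argue strictly: dropping the constraint only appends factors lying in $(0,1)$, so
\[
\prod_{\substack{j=1\\ a\nmid j}}^{n-1}\bigl(1 - 2^{-j}\bigr) \ge \prod_{j=1}^{n-1}\bigl(1 - 2^{-j}\bigr) \ge \prod_{j=1}^{\infty}\bigl(1 - 2^{-j}\bigr) > \frac14 \ge \frac{n}{2^n} \ge \frac{a}{2^n},
\]
where $\prod_{j\ge 4}(1-2^{-j}) \ge 1 - \sum_{j\ge 4}2^{-j} = \tfrac78$ gives the explicit bound $\tfrac12\cdot\tfrac34\cdot\tfrac78\cdot\tfrac78 = \tfrac{147}{512} > \tfrac14$, while $n\mapsto n/2^n$ is decreasing for $n\ge 4$ (value $\tfrac14$ at $n=4$) and $a\le n$ because $a$ divides $n$. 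The strictness of the middle inequality rules out equality for $n\ge 4$.

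The main obstacle is securing a clean, explicit constant lower bound for the tail product $\prod_{j\ge 1}(1-2^{-j})$ that beats $n/2^n$ \emph{uniformly} for all $n\ge 4$: the crude per-factor estimate $q^n - q^k \ge q^{n-1}(q-1)$ is far too weak at $q=2$, since it ignores that nearly every factor is close to $q^n$. Pinning down $\prod_{j\ge1}(1-2^{-j}) > \tfrac14$ against $n/2^n \le \tfrac14$ is precisely what carries the bound, with the sharp cases $n=2,3$ isolated and verified by hand.
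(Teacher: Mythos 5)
Your proof is correct, and it takes a genuinely different route from the paper's. The paper verifies $n=2,3,4$ by inspection and then, for $n\ge 5$, isolates the factor $q^n-q^{n-1}$ (present because $a\nmid n-1$), bounds each of the remaining $\delta-1$ factors below by $q^n-q^{n-2}$, and reduces to showing $\tfrac{1}{a}q^{n/a}(q-1)\bigl(\tfrac{q^2-1}{q}\bigr)^{\delta-1}>1$, which it settles by a case split on whether $a=n$ or $a<n$. You instead normalize each factor by $q^n$, reindex by $j=n-k$ (using $a\mid n$), and reduce the claim to $G(n,q)=\tfrac{q^n}{a}\prod_{j,\,a\nmid j}(1-q^{-j})\ge 1$; strict monotonicity of $G$ in the real variable $q$ then confines all equality cases to $q=2$ in one stroke, and at $q=2$ the uniform tail bound $\prod_{j\ge 1}(1-2^{-j})>\tfrac14\ge n/2^n\ge a/2^n$ for $n\ge 4$ finishes everything except the two hand-checked equality cases $n=2,3$. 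All the individual steps check out: the count of $n-n/a$ surviving indices, the equivalence $a\nmid k\iff a\nmid(n-k)$, the explicit bound $\tfrac12\cdot\tfrac34\cdot\tfrac78\cdot\tfrac78=\tfrac{147}{512}>\tfrac14$, and the placement of the single strict inequality in the chain, which correctly rules out equality for $n\ge 4$. Your argument is arguably cleaner: it treats $n=4$ uniformly with larger $n$, avoids the paper's $a=n$ versus $a<n$ dichotomy, and makes the location of the equality cases transparent (they can only occur at $q=2$ and, by the strict middle inequality, only for $n\le 3$). The paper's per-factor estimate is cruder but stays entirely within the elementary style of the adjacent lemmas and requires no infinite-product input.
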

\begin{proof}
Let $P := (1/a)\prod_{k=1, a \nmid k}^{n-1} (q^n - q^k)$. We will verify the result directly for $2 \le n \le 4$. When $n=2$, we have $P=\tfrac{1}{2}(q^2-q) \ge 1$, with equality only when $q=2$. Similarly, when $n=3$, $P=\tfrac{1}{3}(q^3-q)(q^3-q^2) \ge q^3$, with equality only when $q=2$. Next, when $n=4$, $P=\tfrac{1}{2}(q^4-q)(q^4-q^3) > q^4$.

From here, assume that $n \ge 5$. Let $\delta = n - (n/a)$, which is the number of factors in the product in $P$. Since $a \nmid (n-1)$, $q^n - q^{n-1}$ always occurs as a factor in $P$, and the other $\delta-1$ factor are bounded below by $q^n - q^{n-2}$. So,
\begin{align*}
P &\ge \dfrac{1}{a} (q^n - q^{n-2})^{\delta - 1} (q^n - q^{n-1})\\
&= \dfrac{1}{a} \Big[ q^{n-1} \cdot \Big(\dfrac{q^2-1}{q}\Big) \Big]^{\delta-1}(q^{n-1})(q-1)\\
&= \dfrac{1}{a}(q^{n-1})^\delta (q-1) \Big(\dfrac{q^2-1}{q}\Big)^{\delta-1}.
\end{align*}

Now, $(n-1)\delta - n(\delta-1) = n - \delta = n/a$. So, to prove that $P > q^{n(\delta-1)}$, it will be enough to show that
\begin{equation}\label{eq:sufficient}
\dfrac{1}{a}(q^{n/a})(q-1) \Big(\dfrac{q^2-1}{q}\Big)^{\delta-1} > 1.
\end{equation}
Suppose first that $a = n$. Then, $\delta = n-1$, and the left-hand side of \eqref{eq:sufficient} is bounded below by
\begin{equation*}\label{eq:n=ell}
\dfrac{1}{n}(2)(1)\Big(\dfrac{3}{2}\Big)^{n-2}.
\end{equation*}
This quantity is greater than 1 since $n \ge 5$, so the result holds when $a = n$. So, assume that $a < n$. Since $n \ne 4$, we have $2 \le a \le (n/3)$. This gives us the following lower bounds for the terms in \eqref{eq:sufficient}: $(1/a) \ge (3/n)$, $q \ge 2$, $(n/a) \ge 3$, and $\delta \ge (n/2)$. Applying these bounds to the left-hand side of \eqref{eq:sufficient} yields
\begin{equation*}
\dfrac{3}{n}(2^3)(1)\Big(\dfrac{3}{2}\Big)^{(n/2)-1},
\end{equation*}
which is greater than 1 for $n \ge 5$, as desired.
\end{proof}

\begin{thm}\label{thm:AGLupperbound}
Let $n \ge 3$, let $a$ be the smallest prime divisor of $n$, and let $q = q_1 \otimes q_2 = q_1^d$. If $d < n - (n/a)$, then $\sigma(R) \le \sigma(M_n(q_1))$, with equality only when $n=3$ and $q_1=2$. Thus, $R$ is $\sigma$-elementary if $(n,q_1) \ne (3,2)$.
\end{thm}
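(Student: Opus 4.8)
The plan is to feed the explicit cover from Corollary~\ref{cor:AGLn>1} against the known formula for $\sigma(M_n(q_1))$, comparing the two term by term. Corollary~\ref{cor:AGLn>1} gives the unconditional bound $\sigma(R) \le q^n + \binom{n}{d}_{q_1} + \omega(d)$, while the formula quoted in Lemma~\ref{lem:bounds} reads $\sigma(M_n(q_1)) = P + \Sigma$, where $P = \tfrac{1}{a}\prod_{k=1,\, a\nmid k}^{n-1}(q_1^n - q_1^k)$ and $\Sigma = \sum_{k=1,\, a\nmid k}^{\lfloor n/2\rfloor}\binom{n}{k}_{q_1}$. So it suffices to prove $q^n + \binom{n}{d}_{q_1} + \omega(d) \le P + \Sigma$ with equality forcing $(n,q_1)=(3,2)$, and I would do this by matching $q^n$ against the product term $P$ and $\binom{n}{d}_{q_1} + \omega(d)$ against the sum term $\Sigma$. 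Given such a split, the final assertion follows: when $(n,q_1)\ne(3,2)$ we obtain $\sigma(R) < \sigma(M_n(q_1)) = \sigma(S)$ (using $\sigma(S) = \sigma(S_1) = \sigma(M_n(q_1))$, since $S_2 \cong \F_{q_2}$ is not coverable), so $R$ is $\sigma$-elementary by Lemma~\ref{AGL:sigmaelem}.

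For the first comparison, recall $q = q_1^d$, so $q^n = q_1^{dn}$, and the running hypothesis $d < n-(n/a)$ gives $d \le n-(n/a)-1$, whence $q^n \le q_1^{n(n-(n/a)-1)}$. Lemma~\ref{lem:prodlowerbound} applied with base $q_1$ yields $P \ge q_1^{n(n-(n/a)-1)}$, so $q^n \le P$. Both inequalities are simultaneously tight only if the exponents agree, i.e. $d = n-(n/a)-1$, and $P = q_1^{n(n-(n/a)-1)}$; by Lemma~\ref{lem:prodlowerbound} and $n\ge 3$ the latter forces $(n,q_1)=(3,2)$, for which indeed $d=1$. Hence $q^n = P$ happens only at $(3,2)$, and $q^n < P$ strictly otherwise.

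For the second comparison I would use the symmetry $\binom{n}{d}_{q_1} = \binom{n}{n-d}_{q_1}$ together with the unimodality of the sequence $k \mapsto \binom{n}{k}_{q_1}$. Set $e := \min(d, n-d)$, so $1 \le e \le \lfloor n/2\rfloor$ and $\binom{n}{d}_{q_1} = \binom{n}{e}_{q_1}$; since $a \mid n$, we have $a \mid d \iff a \mid e$. If $a \nmid e$, then $\binom{n}{e}_{q_1}$ is literally one of the summands of $\Sigma$, and the remaining summands (of which there is at least one unless $n \in \{3,4\}$) dominate the negligible quantity $\omega(d) \le \log_2 d$; when $n \in \{3,4\}$ the constraint $d < n-(n/a)$ forces $d=1$, so $\omega(d)=0$ and $\binom{n}{d}_{q_1}+\omega(d) = \Sigma$ exactly. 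If instead $a \mid e$ (so $e \ge a \ge 2$), then $\binom{n}{e}_{q_1}$ is not a summand, but $e-1$ and $e+1$ are valid indices lying in $[1,\lfloor n/2\rfloor]$ — here one checks that $a\mid e$ with $a\mid n$ excludes the boundary coincidence $e=\lfloor n/2\rfloor$ — and unimodality gives $\binom{n}{e}_{q_1} \le \binom{n}{e+1}_{q_1}$ while $\omega(d) < \binom{n}{e-1}_{q_1}$, so $\binom{n}{d}_{q_1}+\omega(d) < \Sigma$ strictly. In all cases $\binom{n}{d}_{q_1}+\omega(d) \le \Sigma$, with equality only when $n\in\{3,4\}$ and $d=1$.

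Combining the two comparisons yields $\sigma(R) \le q^n + \binom{n}{d}_{q_1} + \omega(d) \le P + \Sigma = \sigma(M_n(q_1))$, and equality throughout requires both $q^n = P$ and $\binom{n}{d}_{q_1}+\omega(d) = \Sigma$. The first already pins down $(n,q_1)=(3,2)$ (where $n=3$ puts us in the singleton case so the second holds too), and in particular the $n=4$ equality case of the second comparison is killed by $q^n < P$. Thus equality occurs only at $(3,2)$, completing the bound; the $\sigma$-elementary conclusion then follows as in the first paragraph. I expect the genuine work to be this second comparison: certifying $\binom{n}{d}_{q_1}+\omega(d)\le\Sigma$ through the unimodality and neighbor-index estimates, verifying the boundary exclusion $e \ne \lfloor n/2\rfloor$ in the $a\mid e$ case, and confirming that its equality cases intersect the equality analysis of $q^n \le P$ only at $(n,q_1)=(3,2)$.
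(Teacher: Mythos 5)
Your proposal is correct and follows essentially the same route as the paper: bound $\sigma(R)$ by $q^n + \binom{n}{d}_{q_1} + \omega(d)$ via Corollary~\ref{cor:AGLn>1}, match $q^n$ against the product term using Lemma~\ref{lem:prodlowerbound}, match $\binom{n}{d}_{q_1}+\omega(d)$ against two summands of the $q$-binomial sum using symmetry and unimodality, and check $n\in\{3,4\}$ directly. The only cosmetic difference is which summands you pair with: the paper uses $\binom{n}{\lfloor n/2\rfloor}_{q_1}$ (or $\binom{n}{\lfloor n/2\rfloor-1}_{q_1}$ when $4\mid n$) together with $\binom{n}{1}_{q_1}$, rather than your neighbor indices $e\pm 1$.
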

\begin{proof}
Recall that
\begin{equation*}
\sigma(M_n (q_1)) = \frac{1}{a} \prod_{k=1,\\ a \nmid k}^{n-1} (q_1^n - q_1^k) + \sum_{k=1,\\ a \nmid k}^{\lfloor n/2 \rfloor} {n \choose k}_{q_1}.
\end{equation*}
By Corollary \ref{cor:AGLn>1}, $\sigma(R)$ is bounded above by
\begin{equation*}
B:= q^n + \binom{n}{d}_{q_1} + \omega(d)
\end{equation*}
Hence, it suffices to show that $B < \sigma(M_n(q_1))$ except when $n=3$ and $q_1=2$. Assume that $d < n - (n/a)$. To ease the notation, let $f = \lfloor n/2 \rfloor$.

The cases $n=3$ and $n=4$ (both of which force $d=1$) can be verified by inspection. Note that if $n=3$ and $q=2$, then $B=15=\sigma(M_3(2))$. So, assume that $n \ge 5$. By Lemma \ref{lem:prodlowerbound},
\begin{equation*}
q^n = q_1^{nd} \le q_1^{n(n-(n/a)-1)} < \frac{1}{a}\prod_{k=1, a \nmid k}^{n-1} (q_1^n-q_1^k).
\end{equation*}
Next, the sum
\begin{equation}\label{eq:thesum}
\sum_{k=1, a \nmid k}^f \binom{n}{k}_{q_1}
\end{equation}
has at least two summands, one of which is $\binom{n}{1}_{q_1}$. Certainly, $\omega(d) < n < \binom{n}{1}_{q_1}$. If $n \not\equiv 0$ mod $4$, then $\binom{n}{f}_{q_1}$ occurs as a summand of \eqref{eq:thesum}, and $\binom{n}{d}_{q_1} \le \binom{n}{f}_{q_1}$. If $n \equiv 0$ mod $4$, then $d \le f-1$, and $\binom{n}{f-1}_{q_1}$ occurs in \eqref{eq:thesum}, so $\binom{n}{d}_{q_1} \le \binom{n}{f-1}_{q_1}$. Thus, in all cases,
\begin{equation*}
q^n + \binom{n}{d}_{q_1} + \omega(d) \le \frac{1}{a}\prod_{k=1, a \nmid k}^{n-1} (q_1^n-q_1^k) + \sum_{k=1, a \nmid k}^f \binom{n}{k}_{q_1},
\end{equation*}
as desired.
\end{proof}

Finally, we are able to prove our main result.

\begin{proof}[Proof of Theorem \ref{thm:main}]
The cases where $n \le 2$ or $d \ge n-(n/a)$ follow from Theorem \ref{thm:smalln}. When $n \ge 3$, $d < n-(n/a)$, and $R$ is $\sigma$-elementary, the value of $\sigma(R)$ is given in Corollary \ref{cor:AGLn>1}, which itself follows from Theorem \ref{thm:mincoverPi} and the work done in Section \ref{sect:setup}. Finally, the classification of $\sigma$-elementary $R$ with $n \ge 3$ was completed in Theorem \ref{thm:AGLupperbound}.
\end{proof}

\bibliographystyle{plain}
\bibliography{CoveringNumbersOfRings}

\end{document}